\documentclass[a4paper]{amsart}
\usepackage{appendix}

\usepackage{amsfonts}
\usepackage{amsmath}
\usepackage{amsthm}
\usepackage{booktabs}

\usepackage{hyperref}
\usepackage{xspace}
\usepackage[utf8]{inputenc}
\usepackage{xcolor}
\usepackage{mathtools}
\usepackage{esvect}
\usepackage{enumerate}

\usepackage{dsfont}
\usepackage{cleveref}
\usepackage{bm}

\usepackage{commath}
\mathtoolsset{showonlyrefs=false}

\usepackage[style=alphabetic, backend=biber, url=false, doi=false, autocite=footnote]{biblatex}
\hypersetup{breaklinks=true}
\allowdisplaybreaks

\definecolor{dblue}{rgb}{0.,0.,0.55}

\newcounter{counter}
\counterwithin{counter}{section}
\theoremstyle{plain}
\newtheorem{thm}[counter]{Theorem}
\newtheorem{lem}[counter]{Lemma}
\newtheorem{cor}[counter]{Corollary}
\newtheorem{prop}[counter]{Proposition}
\theoremstyle{definition}
\newtheorem{example}[counter]{Example}
\newtheorem{defn}[counter]{Definition}
\newtheorem{rem}[counter]{Remark}

\newcommand{\mull}{\textbf{m}}

\newcommand{\diag}{\text{diag}}

\newcommand{\cov}{\text{Cov}}
\newcommand{\var}{\text{Var}}
\newcommand{\NN}{\mathbb{N}}
\newcommand{\RR}{\mathbb{R}}
\newcommand{\Rplus}{\mathbb{R}_{\geq 0}}

\newcommand{\ind}[1]{\bm{1}_{#1}}

 \newcommand{\indep}{\perp\!\!\!\!\perp}

\usepackage{yhmath}

\usepackage{comment}
\usepackage{xcolor}
\definecolor{aliceblue}{rgb}{0.94, 0.97, 1.0}
\definecolor{apricot}{rgb}{0.98, 0.81, 0.69}
\definecolor{antiquewhite}{rgb}{0.98, 0.92, 0.84}
\usepackage{xargs}
\usepackage[colorinlistoftodos,prependcaption,textsize=scriptsize,linecolor=orange,backgroundcolor=antiquewhite,bordercolor=orange]{todonotes}

\newcommandx{\info}[2][1=]{\todo[linecolor=blue,backgroundcolor=aliceblue,bordercolor=blue,#1]{#2}}

\makeatletter
\newcommand{\amsvect}{%
  \mathpalette {\overarrow@\vectfill@}}
\def\vectfill@{\arrowfill@\relbar\relbar{\raisebox{-3.81pt}[\p@][\p@]{$\mathord\mathchar"017E$}}}
\newcommand{\amsvectb}{%
  \mathpalette {\overarrow@\vectfillb@}}
  \newcommand{\vecbar}{%
  \scalebox{0.8}{$\relbar$}}
\def\vectfillb@{\arrowfill@\vecbar\vecbar{\raisebox{-4.35pt}[\p@][\p@]{$\mathord\mathchar"017E$}}}
\makeatother

\usepackage{accents}
\newlength{\dhatheight}

\makeatletter
\newcommand\definealphabetloop[3]{%
  \ifx\relax#3\expandafter\@gobble\else\expandafter\@firstofone\fi
  {\expandafter\providecommand\expandafter*\csname#1#3\endcsname{#2{#3}}%
   \definealphabetloop{#1}{#2}}%
}%
\newcommand\definealphabet[2]{%
  \definealphabetloop{#1}{#2}abcdefghijklmnopqrstuvwxyzABCDEFGHIJKLMNOPQRSTUVWXYZ\relax
}%
\definealphabet{b}{\mathbb}%
\definealphabet{c}{\mathcal}%
\definealphabet{f}{\mathbf}%
\definealphabet{r}{\mathrm}%
\definealphabet{s}{\mathscr}%
\definealphabet{t}{\mathtt}%
\makeatother

\begin{document}

\title{Moments of polynomial processes under affine resetting}
\date{\today}

\author{Johannes Assefa}
\address{Department of Mathematical Stochastics\\Dresden Technical University}
\email{johannes.assefa@tu-dresden.de}
\author{Martin Keller-Ressel}
\address{Department of Mathematical Stochastics\\Dresden Technical University}
\email{martin.keller-ressel@tu-dresden.de}
\author{Zbigniew Palmowski}
\address{Department of Analysis and Stochastic Processes\\Wroc{\l}aw University of Science and Technology}
\email{zbigniew.palmowski@pwr.edu.pl}
\thanks{Zbigniew Palmowski was partially supported by the National Science Centre (Poland): grant 2023/51/B/ST1/01270.}

%\subjclass[2010]{}

\begin{abstract}
We consider polynomial processes, i.e., continuous-time Markov processes that preserve polynomials without increasing degree, and augment them with resetting. Resetting means that at the jump times of an independent renewal process, the polynomial process is instantaneously reset to a new state, which can be chosen deterministically, randomly or in state-dependent fashion. We show that under certain types of resets (`affine resetting') the moments of the resulting process remain tractable and can be calculated by matrix exponentials (for Poissonian reset times) or by a matrix renewal equation (for more general reset times). Our framework encompasses many models that have been considered in the existing literature, such as growth-collapse processes with deterministic growth or with L\'evy inflow, and Brownian motion with resets to the origin, to a fixed distribution
or by multiplying by a fixed proportion.
\end{abstract}

\maketitle

\tableofcontents

\section{Introduction}
The resetting of stochastic processes has become a prominent modeling tool across various fields, including
searching for a static target \cite{Bel, Ben, White}, the transmission control protocol \cite{MR2023017}, queueing theory, reliability, and stochastic control (see also \cite{Bonomo22,Mori23,Evans20} and references therein).
In a standard framework, a time-homogeneous Markov process $X=(X_t)_{t\ge0}$ evolves on a state-space $S\subseteq \mathbb R^d$ and undergoes \emph{resetting} at a (often independent) sequence of reset times $(\tau_i)_{i=1}^\infty$, resulting in a new process $\widetilde X$. The reset is instantaneous, may be state-dependent, and may involve additional randomization. Mathematically, the reset at $\tau_i$ can be expressed as
\begin{align*}
	\widetilde X_{\tau_i} = r(\widetilde X_{\tau_i-},Z_i),
\end{align*}
where $r(x,z)$ is a deterministic function and each mark $Z_i$ provides additional (independent) randomization, if desired.
By $Z$ we denote the generic mark.
Most contributions of the existing literature study very particular combinations of base process $X$ and reset function $r(x,z)$. Evans et al. \cite{Evans11}, for example, examine Brownian motion in combination with resetting to the origin or to a fixed distribution.
Boxma et al. \cite{Boxma06} study growth-collapse processes which correspond to deterministic linear growth with resets (`collapse') to a randomized proportion of the pre-reset value see also \cite{MR2023017}.
Detailed analysis can be also done for the $\alpha$-stable process $X$ with fixed proportional resets; see \cite{10.1214/25-AAP2248}.

While much of the existing literature focuses on specific combinations of base processes and reset functions
this paper establishes a general framework for the class of polynomial processes with resetting.
The primary objective of this study is to analyze polynomial processes under affine resetting {\bf (PUARs)} $\widetilde X$ and to provide a unified methodology
for computing higher-order moments and cross-moments.
We also provide a representation of PUARs in terms of their semimartingale characteristics.

Specifically, this paper defines a PUAR based on the following structural assumptions:
\begin{itemize}
    \item the base process $X$ is a polynomial process in the sense of \cite{Cuchiero12};
    \item the reset function $r(x,z)$ is `affine', i.e. of the matrix-vector product form $r(x,z) = z_0 + \bar z \cdot x$, where
    the random mark $Z$ with realisation  $z= (z_0, \bar z)\in \RR^d \times \RR^{d \times d}$ has law $\mu$ with finite moments of all orders;
    \item the resets $(\tau_i)_{i=1}^\infty$ are independent from $X$ and have i.i.d inter-arrival times with distribution $F$.
    %\item the resulting process $\widetilde X$ after resetting being a polynomial process under affine resetting {\bf (PUAR)}.
\end{itemize}
Unless the inter-arrival times are exponentially distributed, the MPUR $\widetilde X$ is in general not a Markov process.
Nevertheless, in Lemma \ref{lem:regenerative} we show that it satisfies the important \emph{regenerative property}.

To describe our main results, we start from recalling the definition of polynomial process. The formal definition is given in Definition \ref{def:MPUR}.
%Let $ S $ is assumed to be a closed subset of $ \mathbb{R}^d $ with nonempty interior, and we write $ \mathcal{S} $ for its Borel $ \sigma $-algebra.
We denote by $\cP_n = \cP_n(S)$ the space of polynomials on $\bR^d$ of degree at most $n \in \bN$ restricted to $S\subseteq\bR^d$ and set $\cP=\cup_n\cP_n$.
The dimension of the space of polynomials $\cP_n$ is given by
$	D(n) = \dim \cP_n = \binom{d+n}{n}$.
Fixing a graded and monomial basis $H_n(x)$ in $\cP_n$,  any polynomial $p \in \cP_n$ can be represented as a vector $\vec{p}$ in the basis $H_n$, that is,
% e.g., $(x,y,z)\mapsto 3z + xy$ is represented as $\vec{p} = (0,0,0,3,0,1,0,0,0,0)$ in \eqref{eq:basis_H2}. Mathematically, the polynomial $p \in \cP_n$ and its basis representation $\vec p$ are connected by
\begin{equation*}
p(x) = H_n(x)^\top \vec p, \qquad x \in S.
\end{equation*}
An $S$-valued time-homogeneous Markov process $X$ with an extended generator $\cA$ and a semigroup $ (P_t)_{t \geq 0} $ is
called \emph{$m$-polynomial} if for
all $k\in\{1,\dots,m\}$, $p\in\cP_k$, and $x\in S$, $t\geq 0$ we have $P_t\lvert p\rvert(x)<\infty$ and $\cA(\cP_k)\subseteq\cP_k$.
If $X$ is $m$-polynomial for every $m\geq0$, then it is called \emph{polynomial}.
The key fact about this class of processes is
the existence of a matrix $A=\cA|_{\cP_k}\in \bR^{D(k) \times D(k)}$, such that $P_t|_{\cP_k}=e^{tA}$ for all $t\geq0$; cf.~\cite{Cuchiero12}.

A foundational observation of this work is demonstrating that an affine reset mechanism
maps polynomials to polynomials of the same degree, allowing the operation to be represented by a graded upper triangular matrix $R$, termed the reset matrix.
More precisely, we prove in this article that for the mapping
\begin{align*}
	\cR: \quad p(x) \mapsto \int_{\cZ} p(r(x,z)) \mu(dz)
\end{align*}
that maps $\cP_n(S)$ to $\cP_n(S)$, for any $n \in \bN$, there exist a $D(n) \times D(n)$-matrix $R=R_n$ such that
\begin{align*}	
	(\cR p)(x) = H_n(x)^\top R_n \vec p, \quad x \in S.
\end{align*}
%The matrix $R$ is a graded upper triangular matrix and we call it the \emph{reset matrix} associated to $(r,\cZ,\mu)$.

Leveraging above property, in this paper we derive a precise moment formula for the process
$\widetilde X$, that is, we show that
for any $n \in \NN_0$ there exists a kernel $K = K_n \colon\bR_+ \to \bR^{D(n) \times D(n)}$, such that
\begin{equation*}
	\bE_x[p(\widetilde X_t)] = H(x)^\top K(t) \vec p,\qquad \forall\,t\geq0, p \in \cP_n.
\end{equation*}
Further, the kernel $K$ is the unique locally bounded solution of the matrix renewal equation
	\begin{align*}
		K(t)=e^{tA} (1-F(t)) + \int_0^t e^{uA} R K(t-u)\,F(du), \quad t\geq0.
	\end{align*}

In the case of Poissonian resetting, that is, when $F(t)=1-e^{-\lambda t}$ for $\lambda >0$,
the kernel $K$ has an explicit form:
\begin{equation*}
    K(t) = \exp\left(t\big(A + \lambda (R - I)\big)\right).
    \end{equation*}
In this case the moment formula simplifies into
\begin{align*}
		\bE_x[p(\widetilde X_t)] = H(x)^\top e^{t(A+\lambda(R-I))}\vec p,
	\end{align*}
for all $p\in\cP_n, x\in S$ and $t \geq 0$.
Taking $\lambda\downarrow 0$ that corresponds to lack of resetting, one recovers the classical result for the polynomial processes:
	\begin{align*}		
		\bE_x[p(X_t)]=H(x)^\top e^{tA}\vec{p},\quad t\geq0;
	\end{align*}
see \cite{Cuchiero12}. % \label{thm:moment_formula_classic} (duplicate; defined at the theorem below)
We also analyse a few explicit examples, in particular, growth--collapse process with Poisson jump times and uniform cutoff and bivariate Brownian motion with total reset.

Another main goal of this work is deriving a cross-moment formula for joint moments of a PUAR $\widetilde X$ evaluated at two different time points $t$ and $t+h$.
Namely, we show in Theorem \ref{cor:cross moment 2} that for $k,n \in \NN_0$ and all $p\in\cP_n$, $q \in \cP_k$,
\begin{align*}
	\bE_x\!\left[ q(\widetilde X_t)p(\widetilde X_{t+h}) \right] = H_{n+k}(x)^\top C_{q,h}(t)\vec p,\quad t,h\geq0,
\end{align*}
where $C_{q,h}\colon\bR_+\to\bR^{D(n+k)\times D(n)}$ is the unique locally bounded kernel satisfying the renewal equation \eqref{eq:correct-cross-moment-renewal}.
Again a more explicit cross-moment formula is given in the case of Poissonian resetting; see
\eqref{eq:poissonian_cross} for details.

Finally, this paper investigates the stochastic representations of PUARs. We first consider the case where $X$ is a polynomial diffusion devoid of killing and explosion satisfying the SDE
	\begin{equation*}
		dX_t = \mathfrak{b}(X_t)\,dt + \sigma(X_t)\,dW_t,
	\end{equation*}
where $W$ is a $d$-dimensional Brownian motion, and where, for every $i,j \in \set{1, \dotsc, d}$, it holds that $\mathfrak{b}_i \in \cP_1$ and $\mathfrak{a}_{ij}  = (\sigma \sigma^\top)_{ij} \in \cP_2$.
We show in Theorem \ref{prop:repesent_reset}, that the polynomial diffusion $\widetilde X$ with affine reset mechanism
%$(t,\cZ,\mu)$, and the inter-arrival distribution $F$ having a density $f$,
satisfies the SDE
\begin{align*}	
	d\widetilde X_t =\mathfrak{b}(\widetilde X_t)\,dt+\sigma(\widetilde X_t)\,dW_t+\int_{\cZ}\left(r(\widetilde X_{t^-},z)-\widetilde X_{t^-}\right)\,\mathsf{M}(dt,dz),
\end{align*}
where
\begin{equation*}
\mathsf{M}(\omega; dt, dz) = \sum_{i=1}^\infty \delta_{\tau_i(\omega), Z_i(\omega)}(dt,dz).
\end{equation*}
Further, we show that for any polynomial semi-martingale $X$ the process $\widetilde X$ is also a special semimartingale and we identify its  characteristics $(\widetilde B, \widetilde C, \widetilde\nu)$.

The results of this paper extend the previous seminal results concerning polynomial processes (see \cite{Cuchiero12}) by adding
the special mechanism of the affine resetting. It appears that this new feature allows to keep the main properties
of polynomial processes at the cost of slightly more complex expressions. Consequently, many known applications of polynomial processes can be
be also analysed in the context of the PUARs; see \cite{Guida21, Filipovic17, Filipovic19} for more details.\par 
The paper is organised as follows.
\Cref{sec:preliminaries} introduces Markov processes under resetting, polynomial processes, and related notation. In Section \ref{sec:pol_under_res} we prove the moment and cross-moment formulas. Finally, in Section \ref{sec:stochastic_res} we provide the semimartingale
stochastic representations of polynomial diffusions under affine resetting.

\section{Preliminaries}\label{sec:preliminaries}
In this section we collect some technical information that will be used throughout the paper.
\subsection{Markov processes under resetting}\label{sec:MPUR}
In the following, we introduce a general mathematical framework to treat Markov processes undergoing resetting. While many different variations of such processes have been studied in the literature (see e.g. \cite{Boxma06, Evans20, Evans11}) and there is also a substantial related literature on renewal processes and semi-Markov processes (see e.g. \cite{asmussen2003applied, Harlamov08, limnios2012semi}), we could not find an existing framework that fits our needs exactly, which motivates this subsection.\\
We work on a standard probability space $(\Omega, \cF, \bP)$ and assume that the following objects are given:
\begin{itemize}
\item A c\`adl\`ag, time-homogenous Markov process $X = (X_t)_{t \geq 0}$ with state space $S\subseteq \bR^d$; represented by a family of laws $(\bQ_x)_{x \in S}$ on the Skorokhod space $\bD(\Rplus, S)$;
\item An initial distribution $\nu$  on $S$;
\item A distribution function $F$ on $\Rplus$ without atom at zero. This is the \emph{distribution of inter-arrival times} of resets;
\item A measurable function $r: S \times \cZ \to S, (x,z) \mapsto r(x,z)$, where the \emph{mark space} $\cZ$ is a Polish space. The function $r$ is called the \emph{reset function}.
\item A distribution $\mu$ on the mark space $\cZ$. We call the triplet $(r,\cZ,\mu)$ the \emph{reset mechanism}.
\end{itemize}
Before defining the new process $\widetilde X$ for $X$ under resetting, we briefly explain the construction in words: First, a sequence $(\tau_i)_{i=0}^\infty$ of reset times, independent of $X$, whose inter-arrival times are i.i.d.\ with distribution $F$, is constructed. Between the reset times $\widetilde X$ evolves according to the law of the Markov process $X$. At each reset time $\tau_i$, the process $\widetilde X$ undergoes instantaneous, state-dependent and randomized resetting to its new value
\begin{equation}\label{eq:reset}
\widetilde X_{\tau_i} = r(\widetilde X_{\tau_i-}, Z_i),
\end{equation}
where the $Z_i$ are i.i.d.\ $\cZ$-valued random marks with distribution $\mu$, representing additional randomization at reset times. We formalize this pathwise construction as follows:

\begin{defn}[Markov process under resetting]\label{def:MPUR} Set $\tau_0 = 0$ and $Y_0 \sim \nu$. Recursively and independently, conditionally on $Y_{n-1} = y$, sample for each $n \in \NN$
\begin{itemize}
\item inter-arrival time $W_{n} \sim F$;
\item Markov segment $X^{(n)} \sim \bQ_y$;
\item mark $Z_{n} \sim \mu$;
\end{itemize}
and set
\[Y_{n} = r(X_{W_{n}-}^{(n)}, Z_{n}).\]
From this recursion define the \emph{reset times}
\[\tau_n = W_1  + \dotsm + W_n\]
and the \emph{Markov process under resetting (MPUR)} $\widetilde X$
as
\[\widetilde X_t = X_{t - \tau_{n-1}}^{(n)}, \qquad \tau_{n-1} \le t < \tau_n.\]
Finally, we write
\[N_t = \sum_{i=1}^\infty\ind{\tau_i \le t}\]
for the reset counter, and define
\[\cG_t := \sigma\left((\widetilde X_s)_{s \le t}, (N_s)_{s \le t}, (Z_i \ind{\tau_i \le t})_{i=1}^\infty\right) \vee \cN,\]
where $\cN$ is the $\sigma$-algebra generated by all $\bP$-null sets. Thus $\bG = (\cG_t)_{t \geq 0}$ is the completion of the smallest filtration for which $\widetilde{X}$ is adapted, the $\tau_i$ are stopping times, and the mark $Z_i$ is revealed at time $\tau_i$.
\end{defn}

\begin{rem}% \label{rem:MPUR}
We make the following observations:
\begin{enumerate}[(i)]
\item Since $\widetilde X$ is piecewise defined from c\`adl\`ag segments, also the process $\widetilde X$ is c\`adl\`ag. The recursive construction is measurable, thus $\widetilde X$ is measurable as a map from $(\Omega, \cF)$ to $\bD(\Rplus, S)$. We denote the induced law on $\bD(\Rplus, S)$ by $\bP^\nu$. In particular, we write $\bP^x$ when $\nu = \delta_x$ for $x \in S$.
\item In the literature on regenerative processes, the process $(\tau_n, Y_n)_{n \in \NN_0}$ is often called the \emph{embedded renewal process}. Note that $\widetilde X_{\tau_n} = Y_n$ for all $n \in \NN_0$.
\end{enumerate}
There are two relevant special cases of resets:
\begin{enumerate}[(i)]
\item In the case of \emph{unrandomized resets}, the mark space $\cZ$, the distribution $\mu$ and the marks $(Z_i)_{i=1}^\infty$ are superfluous, and the reset mechanism can be reduced to $r(x,z) = r(x)$, depending on $x$ only.
\item In the case of \emph{state-independent resets}, only the marks, but not the current state, determine the reset. In this case, we can choose $\cZ = S$ and reduce the reset mechanism to $r(x,z) = z$, so that the new value after the reset is drawn directly from $\mu$. Such processes have been described in \cite{Avrachenkov12}.
\end{enumerate}
\end{rem}

\begin{example}\label{ex:MPUR}
    \begin{enumerate}[(i)]
\item Many \emph{growth-collapse} processes (see e.g. \cite{Boxma06}) fit into this framework. To obtain the `growth-collapse process with Poisson jump times and uniform cutoffs' from \cite[Sec.~4]{Boxma06}, we can set $S = \Rplus$, $X_t = X_0 + ct$ (deterministic growth at rate $c > 0$), $F(t) = 1-e^{-\lambda t}$ (exponential inter-arrival times), and $r(x,z) = xz$ where $\mu$ is uniformly distributed on $\cZ = [0,1]$. The latter resetting mechanism represents the `collapse' to a uniformly distributed proportion of the pre-collapse value.\\
To obtain a growth-collapse process with `L\'evy-inflow', as in \cite{Eliazar04}, the process $X$ simply has to be substituted by a L\'evy subordinator.
\item \emph{Brownian motion with reset to zero or to a fixed distribution $\mu$}, studied for example in \cite{Evans11, Evans20}, fits into our framework. Here $S = \RR^d$ and $X$ is $d$-dimensional Brownian motion. Inter-arrival times are exponentially distributed and we have the state-independent reset $r(x,z) = 0$ (reset to zero) or the randomized reset $r(x,z)= z$ to the given distribution $\mu$ on $\cZ = \RR^d$.

    \end{enumerate}
\end{example}
Unless the inter-arrival times are exponentially distributed, the MPUR $\widetilde X$ is generally no longer a Markov process.
Nevertheless, it satisfies the important \emph{regenerative property} demonstrated in the following lemma.
Hence, there exists an embedded Markov renewal process $(\widetilde X_{\tau_n}, \tau_n)$ and the PUAR $\widetilde X$ 
meets the definition of a semi-Markov process.
\begin{lem}[Regenerative Property] \label{lem:regenerative}
Let $\Phi$ be a bounded measurable function from $\bD(\Rplus, S) \times \Rplus^\NN$ to $\RR^d$. Then
\begin{equation}\label{eq:regenerative}
\bE^x \left[\Phi\big((\widetilde X_{\tau_n + t})_{t \geq 0}, (\tau_{n+k} - \tau_n)_{k \in \NN}\big) \mid \cG_{\tau_n}\right] = \bE^{\widetilde X_{\tau_n}} \left[\Phi\big((\widetilde X_t)_{t \geq 0}, (\tau_{k})_{k \in \NN}\big)\right]
\end{equation}
$\bP_x$-a.s. for all $n \in \NN_0$.
 \end{lem}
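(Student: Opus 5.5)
The plan is to exploit the recursive construction in Definition~\ref{def:MPUR} directly. The post-$\tau_n$ path and future inter-arrival times are a fixed measurable functional of $Y_n=\widetilde X_{\tau_n}$ and the ``future'' sampled objects $(W_{m}, X^{(m)}, Z_{m})_{m>n}$. Conditionally on the past, these have the same joint law as the objects used to construct $\widetilde X$ under $\bP^{y}$ with $y=Y_n$.

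\emph{Step 1 (past $\sigma$-algebra).} Set $\cH_n:=\sigma\big(Y_0,(W_m,X^{(m)},Z_m)_{m\le n}\big)\vee\cN$. I will show $\cG_{\tau_n}\subseteq\cH_n$. The inclusion holds because on $\{\tau_n\le t\}$ every generator of $\cG_t$, stopped at $\tau_n$, is a measurable function of the first $n$ segments, inter-arrival times and marks. Concretely, $\widetilde X_{s\wedge\tau_n}$, $N_{s\wedge\tau_n}$ and $Z_i\ind{\tau_i\le \tau_n}$ are $\cH_n$-measurable. I would use the standard characterization that $\cG_{\tau_n}$ is generated by the stopped processes. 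Since the filtration is generated by these processes up to null sets, this is a Galmarino-type argument. Alternatively, I can verify directly that for $A\in\cG_{\tau_n}$ the set $A\cap\{\tau_n\le t\}$ is determined by the first $n$ cycles, using a monotone class argument over cylinder events. This is the step I expect to be the main technical obstacle, because $\cG_t$ is defined through $\widetilde X$ rather than through the underlying sequences. A monotone class argument on generating cylinder sets $\{\widetilde X_{s_j}\in B_j, N_{s_j}=k_j, Z_i\ind{\tau_i\le s_j}\in C_{ij}\}\cap\{\tau_n\le t\}$ should settle it.

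\emph{Step 2 (identification of the shifted process).} For $t\ge0$ and $k\ge1$ we have $\tau_{n+k}-\tau_n=W_{n+1}+\dots+W_{n+k}$ and $\widetilde X_{\tau_n+t}=X^{(n+m)}_{t-(\tau_{n+m-1}-\tau_n)}$ for $\tau_{n+m-1}-\tau_n\le t<\tau_{n+m}-\tau_n$. Hence
\[\Phi\big((\widetilde X_{\tau_n+t})_{t\ge0},(\tau_{n+k}-\tau_n)_k\big)=\Psi\big(Y_n,(W_{n+m},X^{(n+m)},Z_{n+m})_{m\ge1}\big),\]
where $\Psi$ is the same measurable map that produces $\Phi((\widetilde X_t),(\tau_k))$ from $(Y_0,(W_m,X^{(m)},Z_m)_{m\ge1})$. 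Measurability of $\Psi$ follows from the measurability of the recursive construction noted in the Remark after Definition~\ref{def:MPUR}.

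\emph{Step 3 (conditional law).} By the recursive sampling, conditionally on $\cH_n$ the sequence $(W_{n+m},X^{(n+m)},Z_{n+m})_{m\ge1}$ is generated by the same kernels started from $Y_n$. Its conditional law is therefore $\kappa(Y_n,\cdot)$, where $\kappa(y,\cdot)$ is the law of $(W_m,X^{(m)},Z_m)_{m\ge1}$ given $Y_0=y$. This can be shown by induction on $m$ using the tower property of the kernels $F\otimes\bQ_y\otimes\mu$. Consequently
\[\bE\big[\Psi(Y_n,\dots)\mid\cH_n\big]=g(Y_n),\qquad g(y)=\bE^{y}\big[\Phi((\widetilde X_t)_{t\ge0},(\tau_k)_k)\big],\]
by the standard freezing lemma for bounded measurable functionals. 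Since $g(Y_n)$ is $\cG_{\tau_n}$-measurable (indeed $Y_n=\widetilde X_{\tau_n}$) and $\cG_{\tau_n}\subseteq\cH_n$ by Step 1, the tower property gives \eqref{eq:regenerative}. The identity is applied componentwise in $\RR^d$.
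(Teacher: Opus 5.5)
Your proposal is correct and follows the paper's proof essentially step for step. The shared steps are: the same cycle $\sigma$-algebra $\cH_n$ (your use of $Y_0$ in place of $Y_n$ is harmless up to null sets), the same measurable representation of the post-$\tau_n$ path and reset times through $Y_n$ and the future cycles, the conditional law of the future cycles given $\cH_n$ as the law started from $Y_n$ (the paper splits this into a conditional independence and an equality of conditional laws), and the tower property via $\cG_{\tau_n}\subseteq\cH_n$.

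The paper only asserts that last inclusion. Of your two routes for it, use the one through the stopped processes $\widetilde X_{s\wedge\tau_n}$, $N_{s\wedge\tau_n}$, $Z_i\ind{\tau_i\le s\wedge\tau_n}$. The cylinder sets $\{\widetilde X_{s_j}\in B_j,\dots\}\cap\{\tau_n\le t\}$ of $\cG_t$ generally depend on the $(n+1)$-st cycle, so a monotone class argument over them alone cannot yield $\cH_n$-measurability.
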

\begin{proof}
For $n \in \NN$, we write
\[C_n = (W_n, X^{(n)}, Z_n)\]
for the random variables defined in the $n$-th `cycle' of the recursion in Definition~\ref{def:MPUR}. The random vectors $C_n$ take values in $\mathsf{C} := \Rplus \times \bD(\Rplus, S) \times \cZ$. Moreover, define the completed discrete filtration
\[\cH_n := \sigma(Y_n, (C_k)_{1 \le k \le n}) \vee \cN.\]
By Definition~\ref{def:MPUR}, we have conditional independence (`future cycles are independent of the past, conditionally on $Y_n$')
\begin{equation}\label{eq:cond_indep}
(C_{n+k})_{k \in \NN} \indep_{Y_n} \cH_n, \qquad \forall\,n \in \NN_0,
\end{equation}
and also the coincidence of the conditional laws (`the law of the cycles, conditionally on $Y_n$, is always the same')
\begin{equation}\label{eq:cond_law}
\cL\left((C_{n+k})_{k \in \NN} \mid Y_n = y\right) = \cL\left((C_{k})_{k \in \NN} \mid Y_0 = y\right), \quad \forall\,n\in \NN_0.
\end{equation}
Also by Definition~\ref{def:MPUR}, at any given reset time $\tau_n$, the evolution of $\widetilde X$ after $\tau_n$ and future reset times depend only on $Y_n$ and future cycles $(C_{n+k})_{k \in \NN}$. Formally, there exists a measurable function
\[\Gamma: S \times \mathsf{C}^{\NN} \to \bD(\Rplus, S) \times \Rplus^\NN,\]
such that
\[\left((\widetilde X_{\tau_n + t})_{t \geq 0}, (\tau_{n+k} - \tau_n)_{k \in \NN})\right) = \Gamma\left(Y_n, (C_{n+k})_{k \in \NN}\right).\]
Concatenating $\Phi$ with $\Gamma$, we have
\begin{multline*}
\bE^x \left[\Phi\big((\widetilde X_{\tau_n + t})_{t \geq 0}, (\tau_{n+k} - \tau_n)_{k \in \NN}\big) \mid \cH_n\right] = \\
\bE^x \left[\Phi \circ \Gamma\big(Y_n, (C_{n+k})_{k \in \NN}\big) \mid \cH_n\right] = \bE^x \left[\Phi \circ \Gamma\left(Y_n, (C_{n+k})_{k \in \NN}\right) \mid Y_n\right] = \\
\bE^{Y_n} \left[\Phi \circ \Gamma\left(Y_0, (C_k)_{k \in \NN}\right)\right] =
\bE^{\widetilde X_{\tau_n}} \left[\Phi((\widetilde X_t)_{t \geq 0}, (\tau_{k})_{k \in \NN})\right],
\end{multline*}
where we have used \eqref{eq:cond_indep} in the second and \eqref{eq:cond_law} in the third equality. Finally, observe that $\widetilde X_{\tau_n} = Y_n$ $\bP_x$-a.s. and $\cG_{\tau_n} \subseteq \cH_n$ for all $n \in \NN_0$. Taking $\cG_{\tau_n}$-conditional expectations of the last equality and substituting $\widetilde X_{\tau_n}$ for $Y_n$, \eqref{eq:regenerative} follows.
\end{proof}

The core idea of our paper is to combine this reset construction with the assumptions that

\begin{itemize}
\item $X$ is a polynomial process (in the sense of \cite{Cuchiero12}), and
\item the resetting mechanism $r(x,z)$ is affine.
\end{itemize}
We summarize key results on polynomial processes below; the affine resetting mechanism is discussed in Section~\ref{sec:affine_reset}.

\subsection{Notation related to polynomials}\label{sec:polynomial_notation}
Before our review of polynomial processes, we introduce some notation related to polynomials that will be used throughout this paper. As above, $ S $ is assumed to be a closed subset of $ \mathbb{R}^d $ with nonempty interior, and we write $ \mathcal{S} $ for its Borel $ \sigma $-algebra. We denote by $\cP_n = \cP_n(S)$ the space of polynomials on $\bR^d$ of degree at most $n \in \bN$ restricted to $S\subseteq\bR^d$ and set $\cP=\cup_n\cP_n$. For any $n ,d \in \NN_0$, the space $\cP_n$ is a finite-dimensional vector space. A simple combinatorial argument shows that the dimension of the space of polynomials $\cP_n$ is given by
\begin{align*}
	D(n) = \dim \cP_n = \binom{d+n}{n}.
\end{align*}
Throughout this paper we fix a graded, monomial basis
\begin{equation}% \label{eq:basis}
H(x) = \left(1, h_1(x), h_2(x), \dotsc, \right)
\end{equation}
of $\cP$ and write $H_n$ for its truncation to $\cP_n$. Note that we use \emph{zero-based} indexing for this basis and for all related objects, such as vectors or matrices expressed in it. A basis of $\cP$ is called \emph{graded} if it is ordered by total degree; it is called \emph{monomial} if it consists of monomials only.  For $n = 2$ and $S \subseteq \RR^3$, a graded, monomial basis is
\begin{equation}
	\label{eq:basis_H2}
	H_2(x,y,z) = \left(1, x,y,z, x^2, xy, xz, y^2, yz, z^2\right)^\top,\quad(x,y,z)\in S.
\end{equation}
Note that any polynomial $p \in \cP_n$ can be represented as a vector $\vec{p}$ in the basis $H_n$, e.g., $(x,y,z)\mapsto 3z + xy$ is represented as $\vec{p} = (0,0,0,3,0,1,0,0,0,0)$ in \eqref{eq:basis_H2}. Mathematically, the polynomial $p \in \cP_n$ and its basis representation $\vec p$ are connected by
\begin{equation}
p(x) = H_n(x)^\top \vec p, \qquad x \in S.
\end{equation}
Similarly, any linear map $\cL: \cP_n \to \cP_n$ can be represented by a real $D(n) \times D(n)$-matrix $L_n$, and we have
\begin{equation}
(\cL\,p)(x) = H_n(x)^\top L_n \vec p, \qquad x \in S.
\end{equation}
We also use the following convention: If the degree $n$ of the polynomial $p$ is clear, then subscripts may be dropped from $H_n$, $L_n$, etc. and can be inferred by `matching dimensions'. With this convention, the above equation simply reads as $(\cL\,p)(x) = H(x)^\top L \vec p$.
Occasionally, the quotient spaces $\cP_n^* = \cP_n / \cP_{n-1}$ will be of interest. Such a space consists of the equivalence classes of all degree-$n$ polynomials, which only differ by a polynomial of degree at most $n-1$. The dimension of $\cP_n^*$ is
\begin{align*}
	D_*(n) = D(n) - D(n-1) = \binom{n+d-1}{d-1}.
\end{align*}
Finally, given $q \in \cP_m$, we introduce the \emph{multiplication mapping}
\begin{align*}
	\mull_q \colon\cP_n\to \cP_{n+m}, \quad p \mapsto pq.
\end{align*}
As a linear mapping, $\mull_q$ has a matrix representation in the suitably truncated basis $H$, i.e., a $D(n+m) \times D(n)$-matrix, which we denote by $M_q$. This matrix satisfies
\begin{equation}\label{eq:matrix_Mq}
p(x)q(x) = H_{n+m}(x)^\top M_q \vec p, \qquad x \in S.
\end{equation}
For example, for $q(x,y) = x + 2y - 1$, the basis representation of $M_q: \cP_1 \to \cP_2$ in $H_2(x,y)$ is given by
\begin{align*}
	M_q = \begin{pmatrix}-1 & 1 & 2 & 0 & 0 & 0\\
	0 & -1 & 0 & 1 & 2 & 0 \\
	0 & 0 & -1 & 0 & 1 & 2 \end{pmatrix}^\top.
\end{align*}

\subsection{Polynomial processes}
In this section we review the definition of a polynomial process, which is given in \cite{Cuchiero12} within the framework of time-homogeneous Markov processes, taking values in a closed set $S \subseteq \RR^d$ with non-empty interior. Since polynomial processes are c\`adl\`ag, we may assume that the process $X$ is an element of $\bD(\Rplus, S)$. It is characterized by a family of probability laws $(\bQ_x)_{x \in S}$ giving rise to a semigroup $ (P_t)_{t \geq 0} $ of transition operators via\footnote{From the context, it will be clear whether $\bE_x[.]$ refers to expectation with respect to the law $\bQ_x$ of $X$ or to the law $\bP_x$ of $\widetilde X$.}
\begin{align*}
	P_t f(x) \coloneqq \bE_x[f(X_t)], \quad x \in S,
\end{align*}
acting on all Borel measurable functions $ f \colon S \to \mathbb{R} $ for which the integral is well defined.
%As usual, for any $x \in S$, we denote by $\bP_x(.) = \bP(.|X_0 = x)$ the conditional probability, given that $X$ starts in $x$.
% As polynomial processes are Markov processes $ X=(X_t)_{t \geq 0} $ with the property that $ (f(X_t))_{t \geq 0} $ is a special semimartingale for all linear functionals $ f $ on $ \bR^d $, we can safely assume that the probability space $ \Omega $ is the space of c\`adl\`ag functions $ \omega \colon \mathbb{R}_+ \to S $, as noted in \cite[Section 2]{Cuchiero12}. Thus, $ X $ is understood as the coordinate process $ X_t(\omega) = \omega(t) $, and $ (\mathcal{F}^0_t) $ denotes the filtration generated by $ X $, with $ \mathcal{F}^0 = \bigvee_{t \geq 0} \mathcal{F}^0_t $. In the sequel, we consider a right-continuous filtration $ (\mathcal{F}_t)_{t \geq 0} $ satisfying $ \mathcal{F}^0_t \subseteq \mathcal{F}_t $ and $ \mathcal{F} = \bigvee_{t \geq 0} \mathcal{F}_t $. Finally, we assume that for each $ x \in S $, there exists a probability measure $ \mathbb{P}_x $ on $ (\Omega, \mathcal{F}) $ such that $ X $ is Markovian with respect to $ (\mathcal{F}_t)_{t\geq0} $ and has semigroup $ (P_t)_{t\geq0} $, cf. \cite[Chapter 3]{Rogers00}.
We remark that in contrast to \cite{Cuchiero12} we do not allow for killing or explosion of $X$ and therefore do not attach a `cemetery state' to $S$.\par
%In the following we review the definition of polynomial processes, present their governing backward equation, and outline the derivation of the moment formula in the Markovian case as in \cite{Cuchiero12}. Let $X$ be an $S$-valued time-homogenous Markov process.
 An operator $\cA$ is called \emph{extended generator} for $X$ if its domain $\cD_\cA$ consists of those Borel-measurable functions $u\colon S\to\bR$ for which there exists a function $\cA u$ such that $M^u$ defined by
	\begin{align*}
		M_t^u= u(X_t)-u(x)-\int_0^t\cA u(X_s)\,ds,\quad t\geq0,
	\end{align*}
	is a $\bQ_x$-local martingale for every $x\in S$. Note that on bounded Borel functions, the extended generator coincides with the classical generator of the semigroup $(P_t)_{t \geq 0}$.
\begin{defn}[Cf.~\cite{Cuchiero12}]\label{def:polynomial process}
	An $S$-valued time-homogeneous Markov process $X$ with extended generator $\cA$ is called \emph{$m$-polynomial} if for all $k\in\{1,\dots,m\}$, $p\in\cP_k$, and $x\in S$,
	\begin{align*}
		%,\quad\forall k\in\{1,\dots,m\}.
		P_t\lvert p\rvert(x)=\bE_x[\lvert p(X_t)\rvert]<\infty, \quad \text{and}\quad \cA(\cP_k)\subseteq\cP_k.
	\end{align*}
	If $X$ is $m$-polynomial for every $m\geq0$, then it is called \emph{polynomial}.
\end{defn}
	In the sequel, $X$ denotes an $m$-polynomial process with extended generator $\cA$ and $k\in\{1,\dots,m\}$. Using Markovian techniques and the fact that $ \cA$ leaves $ \mathcal{P}_k $ invariant, one obtains the following defining property of polynomial processes.

\begin{thm}[Moment formula, cf.~\cite{Cuchiero12}]\label{thm:moment_formula_classic}
	Let $X$ be $m$-polynomial with extended generator $\cA$.
	%fix $k\in\{1,\dots,m\}$, and let $H=(h_1,\dots,h_N)$ be a basis of $\cP_k$.
	For each $k\in\{1,\dots,m\}$ there exists a matrix $A\in \bR^{D(k) \times D(k)}$, such that $P_t|_{\cP_k}=e^{tA}$ for all $t\geq0$, i.e. for all $x\in S$ and $p\in\cP_k$,
	\begin{align}
		\label{eq:moment formula with A}
		P_t\,p(x)=\bE_x[p(X_t)]=H_k(x)^\top e^{tA}\vec{p},\quad t\geq0.
	\end{align}
\end{thm}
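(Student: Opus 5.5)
Let $A$ denote the matrix of $\cA|_{\cP_k}$ in the basis $H_k$, i.e.\ $\cA p(x) = H_k(x)^\top A\vec p$ for $p\in\cP_k$. Such an $A$ exists because $\cA(\cP_k)\subseteq\cP_k$ and $\cP_k$ is finite-dimensional. The aim is to show that $u(t,x):=\bE_x[p(X_t)]$ equals $v(t,x):=H_k(x)^\top e^{tA}\vec p$. The approach is the standard one from \cite{Cuchiero12}: show that the candidate function, evaluated along the process, is a true martingale, and read off the expectation.

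\textbf{Step 1: a space-time local martingale.} Fix $T>0$ and set $q_s := e^{(T-s)A}\vec p$, so that $v(T-s,\cdot)=H_k^\top q_s\in\cP_k$ for every $s$. Each basis monomial $h_j\in\cP_k$ lies in $\cD_\cA$, so $M^{h_j}$ is a $\bQ_x$-local martingale. Write
\[
Y_s := v(T-s,X_s)=H_k(X_s)^\top q_s .
\]
Since $q_s$ is deterministic and $C^1$ in $s$, the product rule (the vector $q_s$ has finite variation and is continuous, so there is no covariation term) gives
\[
dY_s = dH_k(X_s)^\top q_s + H_k(X_s)^\top \dot q_s\,ds .
\]
Substituting $dH_k(X_s)^\top = (\cA H_k)(X_s)^\top ds + dM_s^\top = H_k(X_s)^\top A\,ds + dM_s^\top$ and $\dot q_s=-Aq_s$, the drift terms cancel. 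Hence $Y_s - Y_0 = \int_0^s q_r^\top dM_r$ is a local martingale on $[0,T]$.

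\textbf{Step 2: the true martingale property, which is the main obstacle.} Integrability $\bE_x|p(X_t)|<\infty$ alone does not give uniform integrability of the stopped processes. I would first show that $\sup_{s\le T}\bE_x[\|X_s\|^{2j}]<\infty$ for each $2j\le k$, and then use a localizing sequence $\sigma_n$ together with the Burkholder--Davis--Gundy inequality, or a Gronwall argument, to dominate $\sup_{s\le T}|Y_s|$.

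Concretely, take $f(x)=1+\|x\|^{2j}\in\cP_{2j}$. Since $\cA f\in\cP_{2j}$, we have $|\cA f|\le C f$. Stopping at $\sigma_n$, applying Fatou's lemma and then Gronwall's lemma gives $\bE_x f(X_{t\wedge\sigma_n})\le f(x)e^{Ct}$ uniformly in $n$. This handles the even degrees; when $k$ is odd, use the top even degree $\le k$ plus an interpolation argument, or follow Cuchiero et al.'s argument that $\bE\sup_{s\le T}|p(X_s)|<\infty$. These bounds give uniform integrability of $(Y_{s\wedge\sigma_n})_n$, and letting $n\to\infty$ yields $\bE_x[Y_T]=Y_0$. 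This is the delicate point; if it resists, I would reduce to the generator-level argument instead: show that $t\mapsto P_t h_j(x)$ is differentiable with derivative $P_t\cA h_j(x)$ (using the same moment bounds to justify dominated convergence). This gives the linear ODE $\tfrac{d}{dt}P_tH_k^\top = P_tH_k^\top A$, whose unique solution is $H_k^\top e^{tA}$.

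\textbf{Step 3: conclusion.} From $\bE_x[Y_T]=Y_0$ we get $\bE_x[p(X_T)] = v(T,x) = H_k(x)^\top e^{TA}\vec p$. Since $T$ was arbitrary, $P_t|_{\cP_k}$ is represented by $e^{tA}$, which is the claimed formula.
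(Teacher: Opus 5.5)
The paper gives no proof of this statement and only refers to \cite{Cuchiero12}. As far as I recall, $m$-polynomiality is defined there through the semigroup itself ($P_t\cP_k\subseteq\cP_k$), so the formula is essentially a fact about a semigroup on a finite-dimensional space. With the generator-based definition used here, your Step~1 is correct and is the right reduction: the drift of $Y_s=H_k(X_s)^\top e^{(T-s)A}\vec p$ cancels, so $Y$ is a local martingale. Step~2, however, has a genuine gap.

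You only control moments of degree $2j\le k$, whereas $|Y_{T\wedge\sigma_n}|$ grows like $\|X_{T\wedge\sigma_n}\|^k$. For $\deg f\le k$, a bound $\sup_n\bE_x f(X_{T\wedge\sigma_n})<\infty$ is only an $L^1$-bound at the same order, not uniform integrability. No interpolation yields integrability beyond degree $2j$. BDG does not help either, since controlling $[Y]$ requires information on $\cA(p^2)$, i.e.\ on degree $2k$. Your fallback is circular: $\frac{d}{dt}P_th_j=P_t\cA h_j$ amounts to Dynkin's formula $P_th_j=h_j+\int_0^tP_s\cA h_j\,ds$, i.e.\ to $\bE_x M^{h_j}_t=0$, which is exactly the true-martingale property in question.

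In fact the gap cannot be closed at the top degree $k=m$, because the statement fails there under the hypotheses of Definition~\ref{def:polynomial process}. Let $X=1/|B|$ be the inverse Bessel(3) process, with $B$ a three-dimensional Brownian motion started away from the origin, on $S=[0,\infty)$ with $0$ absorbing.
\begin{itemize}
\item It solves $dX_t=-X_t^2\,dW_t$, so $\cA u=\frac12x^4u''$ annihilates $\cP_1$.
\item $\bE_xX_t\le x<\infty$.
\item Hence $X$ is $1$-polynomial in the paper's sense, with $A=0$.
\item But $X$ is a strict local martingale: for $p(x)=x$, $x>0$, $t>0$ one has $\bE_x[p(X_t)]<x=H_1(x)^\top e^{tA}\vec p$.
\end{itemize}

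What makes your argument work is a Lyapunov function of degree \emph{strictly larger} than $k$. Suppose $\cA$ also preserves $\cP_l$ for some even $l>k$; this holds, e.g., if $X$ is polynomial, which is all the rest of the paper needs. Put $f(x)=1+\|x\|^l$.
\begin{itemize}
\item Then $|\cA f|\le Cf$, so $e^{-Ct}f(X_t)$ is a nonnegative local supermartingale, hence a supermartingale.
\item Optional sampling gives $\bE_xf(X_\sigma)\le e^{CT}f(x)$ for every stopping time $\sigma\le T$.
\item Consequently $\{Y_\sigma:\sigma\le T\}$ is bounded in $L^{l/k}$ with $l/k>1$.
\item So $Y$ is a local martingale of class (D) on $[0,T]$, hence a true martingale, and your Step~3 goes through.
\end{itemize}
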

Examples of polynomial processes include Brownian motion, L\'evy processes (under suitable conditions on finiteness of moments), Gaussian and non-Gaussian Ornstein-Uhlenbeck processes, the Cox-Ingersoll-Ross process, all Pearson diffusions, and many stochastic volatility models used in financial mathematics, such as the Heston model, the Bates model, and the Barndorff-Nielsen-Shephard model, cf.~\cite{Cuchiero12}.
\begin{rem}	
\begin{enumerate}[(i)]

	\item With respect to the graded basis $H_n$, the matrix $A_n=\cA|_{\cP_n}$ not only maps $\cP_n$ to $\cP_n$, but also every submatrix $A_k=\cA|_{\cP_k}, 0 \le k \le n$ maps $\cP_k$ to $\cP_k$. Therefore $A_n$ is block upper triangular, with diagonal blocks indexed by total degree $k=0,\dots,n$ and of size $D_\ast(k)$. In particular, $D_\ast(0)=1$ and the degree-zero block always satisfies $A_{00}=0$, since $\cA(x\mapsto 1)=0$.
    \item If the dimension $d$ of $S$ is one, then $D_*(k) = 1$ for all $k \in \NN$ and the matrix $A$ is upper triangular.

    \item An extension of the moment formula is presented in \cite[Theorem 2.5]{Filipovic17}, where the finiteness of absolute moments in \Cref{def:polynomial process} is not required. In this case, the conditional moment formula holds, i.e. for $p\in\cP_k$,
    % finiteness of unconditional moments is not assumed \cite[Example 2.6]{Filipovic19}.
\begin{equation}\label{eq:polynomial_moment_classic}
    	\mathbb{E}[p(X_T) \mid \cF_t] = H_k(X_t)^\top e^{(T-t)A} \vec{p}, \quad t \leq T.
    \end{equation}
\end{enumerate}
%\begin{rem}
%	Notes on polynomial jump diffusions.\info{why did we choose the Markovian framework?}
%\end{rem}
\end{rem}

\

\subsection{Stochastic representations of polynomial processes}
In this section we review two stochastic representations of polynomial processes, which can be found in \cite{Cuchiero12} and \cite{Filipovic17}. The first provides an SDE in the diffusion case, while the second represents the process in terms of semimartingale characteristics. We will extend these representations to the case of processes with resetting in \Cref{sec:stochastic_res}.
\begin{prop}\label{prop:stochastic}
\begin{enumerate}[(i)]
\item Let $X$ be a polynomial diffusion (i.e., a diffusion process which is also a polynomial process), then it satisfies the SDE
	\begin{equation*}
		dX_t = \mathfrak{b}(X_t)\,dt + \sigma(X_t)\,dW_t,
	\end{equation*}
where $W$ is a $d$-dimensional Brownian motion, and where, for every $i,j \in \set{1, \dotsc, d}$, it holds that $\mathfrak{b}_i \in \cP_1$ and $\mathfrak{a}_{ij}  = (\sigma \sigma^\top)_{ij} \in \cP_2$.
\item Let $X$ be a polynomial process on $S$ without killing or explosion. Then, on any of the filtered probability spaces $(\Omega, \cF, \bF, \bP_x), x \in S$, the process $X$ is a special It\^o semimartingale, and its characteristics $(B, C, \nu)$ associated with the ``truncation function'' $\chi(\xi) =\xi$ satisfy
\begin{align}
	\label{eq:B}
	&B_{t,i}=\int_0^{t} \mathfrak{b}_i(X_{s})\,ds,\\
	\label{eq:C}
	& C_{t,ij}+\int_0^t\int_{\bR^d}\xi_i\xi_j \,\nu(ds, d\xi)= \int_0^{t} \mathfrak{a}_{ij}(X_{s}) \,ds,
\end{align}
where $\mathfrak{b}_i \in \cP_1$ and $\mathfrak{a}_{ij} \in \cP_2$, for all $i,j \in \set{1, \dotsc, d}$. Moreover, $C$ and $\nu$ can be written as
\begin{align}
	\label{eq:C_eta}
	C_{t,ij}=\int_{0}^{t}c_{s,ij}\,ds, \quad \nu(dt, d\xi)=K_t(d\xi)\,dt,
\end{align}
where $(c_{ij})_{i,j \leq d}$ is a predictable process and $K_t(\omega,d\xi)\coloneqq K(X_{t-}(\omega),d\xi)$ is a predictable random measure on $(\bR^d, \mathcal{B}(\bR^d))$, which satisfies
\begin{align}
	\label{eq:K}
	\int_{\bR^d} \xi^{\mathbf{k}} K_t(d\xi) =\sum_{|\mathbf{l}|=0}^{|\mathbf{k}|}\alpha_{\mathbf{l}} X_t^{\mathbf{l}}, \quad \textrm{for almost all } t \geq 0,
\end{align}
with some coefficients $\alpha_{\mathbf{l}}$, for every multi-index $\mathbf{k} \in \bN_0^d$ with $|\mathbf{k}| \ge 3$. 
\end{enumerate}
\end{prop}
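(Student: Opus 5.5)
Since \Cref{prop:stochastic} collects known results from \cite{Cuchiero12} and \cite{Filipovic17}, the plan is to reconstruct them from \Cref{def:polynomial process}. The idea is to apply the extended generator $\cA$ to coordinate monomials, read off the martingale decompositions, and identify them with the semimartingale characteristics.

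For (i), write the generator of the diffusion on smooth functions as $\cA f = \mathfrak b^\top\nabla f + \tfrac12\operatorname{tr}(\mathfrak a\nabla^2 f)$.
\begin{itemize}
\item Applying it to $x\mapsto x_i$ gives $\cA x_i = \mathfrak b_i$. Polynomiality forces $\mathfrak b_i\in\cP_1$.
\item Applying it to $x\mapsto x_ix_j$ gives $\cA(x_ix_j) = x_i\mathfrak b_j + x_j\mathfrak b_i + \mathfrak a_{ij}$. Since the left side and the first two terms lie in $\cP_2$, so does $\mathfrak a_{ij}$.
\item Hence $M^{x_i}_t = X_{t,i}-x_i-\int_0^t\mathfrak b_i(X_s)\,ds$ are continuous local martingales.
\item Comparing the It\^o formula for $X_iX_j$ with $M^{x_ix_j}$ yields $\langle M^{x_i},M^{x_j}\rangle_t=\int_0^t\mathfrak a_{ij}(X_s)\,ds$.
\item Choose a measurable square root $\sigma$ of $\mathfrak a$; this needs $\mathfrak a\succeq 0$ on $S$, which holds as a quadratic variation density. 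The martingale representation theorem, possibly on an enlarged probability space, then produces a $d$-dimensional Brownian motion $W$ with $dX_t=\mathfrak b(X_t)dt+\sigma(X_t)dW_t$.
\end{itemize}

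For (ii), the same first step shows that each $X_i$ is the sum of the predictable finite-variation process $\int_0^t\mathfrak b_i(X_s)\,ds$ and a local martingale. So $X$ is a special semimartingale. Since the truncation $\chi(\xi)=\xi$ is admissible for special semimartingales, uniqueness of the canonical decomposition gives \eqref{eq:B}.
\begin{itemize}
\item Next apply It\^o's formula to $X_iX_j$. The predictable compensator of $[X_i,X_j]$ is $C_{ij}+\xi_i\xi_j*\nu$.
\item On the other hand, the drift of $X_iX_j$ is $\int_0^t\cA(x_ix_j)(X_s)\,ds$.
\item Subtracting the contribution $X_{s}\mathfrak b_j+X_s\mathfrak b_i$ and again using uniqueness of the special decomposition yields \eqref{eq:C} with $\mathfrak a_{ij}:=\cA(x_ix_j)-x_i\mathfrak b_j-x_j\mathfrak b_i\in\cP_2$.
\end{itemize}

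The main obstacle is the structural statement \eqref{eq:C_eta}: absolute continuity of $C$ and $\nu$ in $t$, and that $\nu$ has a Markovian kernel $K(X_{t-},d\xi)$.
\begin{itemize}
\item For absolute continuity, $C_{ii}$ and $\xi_i^2*\nu$ are both increasing and their sum is absolutely continuous by \eqref{eq:C}. Each is therefore absolutely continuous, with $C_{ij}$ handled by Kunita–Watanabe. Likewise $\nu(dt,\cdot)$ restricted to sets bounded away from zero is dominated by $|\xi|^2*\nu$, hence absolutely continuous in $t$.
\item For the Markovian form, I would invoke the \c{C}inlar–Jacod representation of Markov It\^o semimartingales, which gives $c_t=c(X_{t-})$ and $K_t=K(X_{t-},\cdot)$ for measurable $c$ and a kernel $K$.
\end{itemize}

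Finally, for \eqref{eq:K} I would proceed by induction on $|\mathbf k|\ge3$.
\begin{itemize}
\item Applying It\^o's formula to $x^{\mathbf k}$ gives a drift of the form
$$
\nabla x^{\mathbf k}\cdot\mathfrak b+\tfrac12\operatorname{tr}(c\nabla^2x^{\mathbf k})+\int\big((x+\xi)^{\mathbf k}-x^{\mathbf k}-\nabla x^{\mathbf k}\cdot\xi\big)K(x,d\xi).
$$
\item Matching this with $\cA x^{\mathbf k}\in\cP_{|\mathbf k|}$ holds for a.e.\ $t$ along paths.
\item Expanding $(x+\xi)^{\mathbf k}$, the top term $\int\xi^{\mathbf k}K(x,d\xi)$ is expressed through $\cA x^{\mathbf k}$ and the lower mixed moments $x^{\mathbf k-\mathbf l}\int\xi^{\mathbf l}K$, with $2\le|\mathbf l|<|\mathbf k|$.
\item The second-order terms with $|\mathbf l|=2$ combine with $c$ into $\mathfrak a$ via \eqref{eq:C}.
\item The induction hypothesis then makes $\int\xi^{\mathbf k}K_t(d\xi)$ a polynomial of degree at most $|\mathbf k|$ in $X_t$, which gives the coefficients $\alpha_{\mathbf l}$.
\end{itemize}
The finiteness of all the integrals involved follows from the moment assumptions in \Cref{def:polynomial process}.
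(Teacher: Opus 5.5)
The paper does not prove this proposition itself. It cites \cite{Filipovic17} (Lemma~2.2) for (i) and \cite{Cuchiero12} (Proposition~2.12) for (ii). You instead rebuild the result from \Cref{def:polynomial process}:
\begin{enumerate}[(i)]
\item $\mathfrak b_i=\cA x_i$ and $\mathfrak a_{ij}=\cA(x_ix_j)-x_i\mathfrak b_j-x_j\mathfrak b_i$ are read off from uniqueness of the canonical decomposition of the special semimartingales $X_i$ and $X_iX_j$;
\item absolute continuity of $C$ and $\nu$ follows because both are dominated by the absolutely continuous process on the right of \eqref{eq:C};
\item \eqref{eq:K} follows by induction on $|\mathbf k|$, comparing the It\^o drift of $X^{\mathbf k}$ with $\cA x^{\mathbf k}\in\cP_{|\mathbf k|}$.
\end{enumerate}
This route is sound, and it buys transparency. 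It shows that the It\^o property is forced by polynomiality rather than assumed. In (i) it does not actually need the generator to be of diffusion form a priori: only $M^{x_i}$, $M^{x_ix_j}$ and path continuity enter before Doob's representation produces $W$ on an extension. The citation buys brevity and hides the technical hypotheses. Your version is also not fully self-contained. The Markovian form $K_t=K(X_{t-},\cdot)$ still rests on the \c{C}inlar--Jacod representation, whose Hunt-process and natural-filtration setting you would have to reconcile with the general filtrations $\bF$ in the statement.

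One justification is wrong as written and needs replacing. Your last sentence says finiteness of the integrals follows from the moment assumption in \Cref{def:polynomial process}. But $\bE_x|p(X_t)|<\infty$ at fixed times does not give local integrability of $\int_0^t\int|\xi|^{m}K(X_{s-},d\xi)\,ds$. You need that integrability for three things:
\begin{enumerate}[(a)]
\item identifying the compensator of $[X_i,X_j]$;
\item compensating the jump sum in It\^o's formula for $x^{\mathbf k}$;
\item splitting $\int\bigl((x+\xi)^{\mathbf k}-x^{\mathbf k}-\nabla x^{\mathbf k}\cdot\xi\bigr)K(x,d\xi)$ into the separate terms $x^{\mathbf k-\mathbf l}\int\xi^{\mathbf l}K(x,d\xi)$.
\end{enumerate}
What actually gives it is that $p(X)$ is a special semimartingale for every $p\in\cP$, because $\cP\subseteq\cD_\cA$. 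For example, take $f(x)=|x|^{2m}$ and the jump sum $J_t=\sum_{s\le t}g(X_{s-},\Delta X_s)$, where $g(x,\xi)=f(x+\xi)-f(x)-\nabla f(x)\cdot\xi$. Then $J$ is nonnegative by convexity, and special as a difference of special semimartingales, hence locally integrable. For $|\xi|>1$ one has $|\xi|^{2m}\le 2^{2m-1}\bigl(g(x,\xi)+(2|x|^{2m}+2m|x|^{2m-1})|\xi|^2\bigr)$. Moreover, $\sum_{s\le t}|\Delta X_s|^2$ is locally integrable because each $X_i^2$ is special, and $X_-$ is locally bounded. With this inserted, your induction goes through as described.
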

\begin{proof}
Assertion (i) follows from \cite[Lemma~2.2]{Filipovic17} and assertion (ii) from \cite[Proposition~2.12]{Cuchiero12}.
\end{proof}

\section{Polynomial processes under affine resetting (PUARs)}\label{sec:pol_under_res}
\subsection{Affine reset mechanisms and the reset matrix}\label{sec:affine_reset}
Recall from Section~\ref{sec:MPUR} that the resets of a Markov process can be described by a triplet $(r,\cZ,\mu)$, its \emph{reset mechanism}. We introduce the following specialization:

\begin{defn}[Affine reset mechanism] \label{def:affine_reset} A reset mechanism $(r,\cZ,\mu)$ is called \emph{affine} if
\begin{enumerate}[(i)]
\item $\cZ \subseteq \RR^d \times \RR^{d \times d}$;
\item $r: S \times \cZ \to S$ is an affine function, i.e. of the form
\begin{equation}\label{eq:affine}
	r(x,z) = z_0 + \bar z \cdot x, \qquad z = (z_0,\bar z) \in \cZ,
\end{equation}
where $z \cdot x$ denotes a matrix-vector product;
\item the random mark $z = (z_0,\bar z) \in \cZ$ has law $\mu$ with finite moments of all orders, i.e.
\begin{align*}
	\int_{\cZ}\lVert z\rVert^l\,\mu(dz)<\infty\quad\text{for all } l\in\bN,
\end{align*}
where $\norm{\cdot}$ is the Euclidean norm or any equivalent norm on $\cZ$.
\end{enumerate}
\end{defn}

\begin{rem}
\begin{enumerate}[(i)]
\item Depending on the state space $S \subseteq \RR^d$ the requirement that $r$ maps $S \times \cZ$ to $S$ together with the affine form \eqref{eq:affine} may impose certain restrictions on the choice of $\cZ$.
\item Simpler reset mechanisms like reset to zero $r(x,z) = 0$ (which does not need a mark space $\cZ$ at all) or $r(x,z') = xz'$ with $z' \in \cZ' = [0,1]$ are also considered affine, since they fit into Definition~\ref{def:affine_reset} after suitably extending $\cZ$.
\end{enumerate}
\end{rem}

%In general, the condition that $r$ maps into $S$ will impose some requirements on $\mathrm{supp}\,F$.
\begin{lem}[The reset matrix]\label{lem:linear stochastic reset}
Let $(r,\cZ, \mu)$ be an affine reset mechanism. Then the mapping
\begin{align*}
	\cR: \quad p(x) \mapsto \int_{\cZ} p(r(x,z)) \mu(dz)
\end{align*}
maps $\cP_n(S)$ to $\cP_n(S)$, for any $n \in \bN$.  Therefore, in any given graded basis $H_n$ of $\cP_n$ it has a matrix representation, given by a $D(n) \times D(n)$-matrix $R_n$, i.e.,
\begin{align}
	\label{eq:R matrix}
	(\cR p)(x) = H_n(x)^\top R_n \vec p, \quad x \in S.
\end{align}
The matrix $R_n$ is a graded upper triangular matrix and we call $R_n$ the \emph{reset matrix} associated to $(r,\cZ,\mu)$.
%Furthermore, the matrix $R_n$ is strictly upper-triangular when the multi-indices of $H_n$ are arranged in graded lexicographic order and $\mathrm{supp}F\subseteq\cZ^\vee$ (lower triangular); conversely, $R$ is strictly upper-triangular when the multi-indices of $H_n$ are arranged in graded reverse lexicographic order and $\mathrm{supp}F\subseteq\cZ^\wedge$ (upper triangular).
\end{lem}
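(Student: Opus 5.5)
It suffices to treat monomials, since $\cR$ is clearly linear and every $p \in \cP_n$ is a finite linear combination of monomials $x^{\mathbf{k}} = x_1^{k_1}\cdots x_d^{k_d}$ with $|\mathbf{k}| \le n$. The plan is to show directly that $\cR(x^{\mathbf{k}})$ is a well-defined polynomial of total degree at most $|\mathbf{k}|$. The matrix representation \eqref{eq:R matrix} and the graded upper triangular structure then follow formally.

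Fix $\mathbf{k}$ with $|\mathbf{k}| = m \le n$. For $z=(z_0,\bar z)$, the $i$-th component of $r(x,z)$ is $r_i(x,z) = z_{0,i} + \sum_{j=1}^d \bar z_{ij} x_j$. This is affine in $x$, and its coefficients are coordinates of $z$. Hence
\[
	\prod_{i=1}^d r_i(x,z)^{k_i}
\]
is a product of $m$ affine functions of $x$. Expanding by the multinomial theorem writes it as $\sum_{|\mathbf{l}| \le m} c_{\mathbf{l}}(z)\, x^{\mathbf{l}}$. Each coefficient $c_{\mathbf{l}}(z)$ is a polynomial in the entries of $z$ of total degree at most $m$. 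In particular $|c_{\mathbf{l}}(z)| \le C(1+\lVert z\rVert^m)$ for a constant $C$ depending only on $m$ and $d$.

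By Definition~\ref{def:affine_reset}(iii), all moments of $\mu$ are finite, so every $c_{\mathbf{l}}$ is $\mu$-integrable. Linearity of the integral over the finite sum then gives
\[
	\cR(x^{\mathbf{k}})(x) = \sum_{|\mathbf{l}|\le m} \Big(\int_\cZ c_{\mathbf{l}}(z)\,\mu(dz)\Big) x^{\mathbf{l}},
\]
which lies in $\cP_m \subseteq \cP_n$. Restricting to $S$ is harmless, because $r$ maps $S\times\cZ$ into $S$, so $p(r(x,z))$ only uses values of $p$ on $S$. This step is where the main care is needed. The obstacle is justifying finiteness and the interchange of integral and sum, and the moment assumption handles it as described. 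One should also note that if a polynomial vanishes on $S$, its image under $\cR$ vanishes on $S$ as well, so $\cR$ is well defined on $\cP_n(S)$; this holds because $p(r(x,z))=0$ for all $x\in S$ and all $z \in \cZ$. Since $S$ has nonempty interior, the monomials are in fact linearly independent on $S$, so no ambiguity arises at all.

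For the matrix form, define the $\mathbf{k}$-th column of $R_n$ as the coefficient vector of $\cR(x^{\mathbf{k}})$ in the basis $H_n$. Linearity of $\cR$ then yields $(\cR p)(x) = H_n(x)^\top R_n \vec p$. Because $\cR$ maps each $\cP_m$ into $\cP_m$ for every $m \le n$, the column indexed by a degree-$m$ basis element has nonzero entries only in rows of degree at most $m$. Since $H_n$ is graded, this is exactly the statement that $R_n$ is block upper triangular with diagonal blocks of size $D_*(m)$, i.e. graded upper triangular.
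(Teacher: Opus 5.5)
Your proof is correct and follows the same route as the paper. For each $z$, $p(r(\cdot,z))\in\cP_n$ because $r(\cdot,z)$ is affine, so its $\mu$-average lies in $\cP_n$; the graded block upper triangular form of $R_n$ then follows from $\cR(\cP_m)\subseteq\cP_m$ for all $m\le n$. You also make explicit two points the paper leaves implicit in the phrase ``average over polynomials'': the coefficients $c_{\mathbf l}(z)$ are polynomials of degree at most $m$ in $z$, hence $\mu$-integrable by Definition~\ref{def:affine_reset}(iii), and $\cR$ is well defined on $\cP_n(S)$.
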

\begin{proof}
	Let $p\in\cP_n(S)$. Since $x \mapsto r(x,z)$ is an affine function, $x \mapsto p(r(x,z)) = p(z_0 + \bar z \cdot x)$ is again in $\cP_n$ for every $z = (z_0,\bar z) \in \cZ$. Thus,
\begin{align*}
	(\cR p)(x)=\int p(r(x,z))\,\mu(dz)
\end{align*}
is an average over polynomials in $\cP_n$, and therefore again an element of $\cP_n(S)$.
Moreover, $\cR\colon \cP_n\to \cP_n$ is a linear operator on a finite-dimensional vector space.  Therefore, it has a matrix representation $R_n$ in the basis $H_n$, such that \eqref{eq:R matrix} holds. The graded upper triangular form of $R_n$ follows from the fact that $\cR$ also maps every subspace $\cP_k$ of $\cP_n$ to itself.
\end{proof}
\begin{example}% \label{ex:reset matrix}
Note that in all examples below we use a \emph{graded, monomial basis} $H_n = (1, h_1, \dotsc, h_{D(n)-1})$, and that the basis, as well as all matrices use \emph{zero-based indexing}; see Section~\ref{sec:polynomial_notation} for further details on notation.
	\begin{enumerate}[(i)]
\item For resetting to zero, i.e., $r\equiv 0$, the reset matrix is
\begin{align*}
	(R_n)_{ij} = \begin{cases}1&\quad \text{if $i=j=0$}\\
	0 &\quad \text{else.}\end{cases}
\end{align*}
\item For resetting to a fixed distribution $\mu$ (i.e. $r(x,z) = z$, $\cZ = S$), the reset matrix is
\begin{align*}
	(R_n)_{ij} = \begin{cases} \int_S h_j(z) \mu(dz) &\quad \text{if $i=0$}\\
	0 &\quad \text{else.}\end{cases}
\end{align*}
\item For deterministic partial resetting, i.e., $r(x,z) = \alpha x$ for some fixed $\alpha \in [0,1]$ the reset matrix is a diagonal matrix given by
\begin{align*}
	R_n = \diag\left(\alpha^k \, I_{D_*(k)}, \quad k=0,\dotsc,n\right),
\end{align*}
i.e. the diagonal consists of powers $\alpha^k$, repeated according to the dimension of $\cP^*_k$, for each $k=0\dotsc n$.
\item For stochastic partial resetting, i.e., $r(x,z) = zx$ and $\mu$ a probability measure on $\cZ = [0,1]$ the reset matrix is a diagonal matrix given by
\begin{align*}
	R_n = \diag\left(\int_0^1 z^k \mu(dz) \, I_{D_*(k)}, \quad k=0,\dotsc, n\right),
\end{align*}
i.e. the diagonal consists of the moments of $\mu$, repeated according to the dimension of the subspace $\cP^*_k$, for each $k=0\dotsc n$.
\item For a more concrete example, consider stochastic partial resetting  on $S = \RR^2$ with $\mu$ the uniform distribution on $[0,1]$. Then
\[R_2 = \diag\left(1, \;1/2, \;1/2, \;1/3, \;1/3, \;1/3 \right),\]
because $\frac{1}{2}, \frac{1}{3}$ are the first and second moment of the uniform distribution. The number of repetitions are the dimensions $D_*(0) = 1, D_*(1) = 2, D_*(2) = 3$, which can be read from the graded basis
\[H_2(x,y) = \left(1, \;x, \;y, \;x^2, \;xy, \;y^2\right).\]

\end{enumerate}
\end{example}

\begin{lem}\label{lem:conditional_reset}
Let $\widetilde X$ be a Markov process under affine resetting with mechanism $(r,\cZ,\mu)$. Then, for any $p \in \cP_n$ and $i \in \NN$, we have
\begin{equation}% \label{eq:conditional_reset}
\bE_x\left[p(\widetilde X_{\tau_i}) \mid \cG_{\tau_i-}\right] = H_n(\widetilde X_{\tau_i-})^\top R_n \vec p,
\end{equation}
where $R_n$ is the reset matrix.
\end{lem}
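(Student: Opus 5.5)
The plan is to use the pathwise construction of Definition~\ref{def:MPUR}. By that construction,
\[
\widetilde X_{\tau_i} = Y_i = r\big(X^{(i)}_{W_i-}, Z_i\big) = r\big(\widetilde X_{\tau_i-}, Z_i\big).
\]
The key structural fact is that $Z_i$ is independent of $\cG_{\tau_i-}$, while $\widetilde X_{\tau_i-}$ is $\cG_{\tau_i-}$-measurable. Given these two facts, the claim follows from the standard ``freezing'' lemma for conditional expectations: for $\xi$ that is $\cG$-measurable and $Z$ independent of $\cG$, we have $\bE[g(\xi,Z)\mid\cG]=\int g(\xi,z)\,\mu(dz)$. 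Applying this with $g(x,z)=p(r(x,z))$ gives $(\cR p)(\widetilde X_{\tau_i-})$, and Lemma~\ref{lem:linear stochastic reset} identifies this with $H_n(\widetilde X_{\tau_i-})^\top R_n\vec p$.

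The steps, in order, are as follows.

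First, I will check integrability. Since $p(z_0+\bar z x)$ is bounded by $C(1+\norm{z})^n(1+\norm{x})^n$, the freezing lemma should first be applied to the truncations $p^{\pm}\wedge N$, which are bounded. The pieces are then recombined using monotone convergence for conditional expectations. The $\mu$-integral is finite for every fixed $x$ by the moment assumption in Definition~\ref{def:affine_reset}(iii). This yields the identity as an a.s.\ equality of generalized conditional expectations, which is all the statement needs. Integrability of $p(\widetilde X_{\tau_i})$ itself is not required for this.

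Second, I will justify measurability and independence. By the definition of $\cG_{\tau_i-}$, namely the $\sigma$-algebra generated by $\cG_0$ and the sets $A\cap\{t<\tau_i\}$ with $A\in\cG_t$, the random variable $\widetilde X_{\tau_i-}$ is $\cG_{\tau_i-}$-measurable because $\widetilde X$ is c\`adl\`ag and adapted. For independence, I will show that $\cG_{\tau_i-}\subseteq \sigma\big(Y_0,(W_k,X^{(k)})_{k\le i},(Z_k)_{k<i}\big)\vee\cN=:\cK$. Before $\tau_i$, the observed path, the reset counter and the revealed marks depend only on these cycle components. Formally, each generator $A\cap\{t<\tau_i\}$ with $A\in\cG_t$ coincides on $\{t<\tau_i\}$ with an event that is measurable with respect to $\cK$. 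In Definition~\ref{def:MPUR}, $Z_i$ is sampled independently of all of these variables, so $Z_i\indep\cK$, and hence $Z_i\indep\cG_{\tau_i-}$.

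The main obstacle is the inclusion $\cG_{\tau_i-}\subseteq\cK$. It has to be handled carefully because $\cG_t$ contains the variables $Z_k\ind{\tau_k\le t}$, and $Z_i\ind{\tau_i\le t}$ is among them. I will argue that on $\{t<\tau_i\}$ this variable vanishes, and that $\widetilde X_s$ for $s\le t$ and $N_s$ are functions of $\cK$-variables there. To do this rigorously, I will check it on a $\pi$-system of cylinder generators $\{\widetilde X_{s_1}\in B_1,\dots\}\cap\{t<\tau_i\}$ and then extend by the monotone class theorem.
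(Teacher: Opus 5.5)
Your proposal is correct and takes the same route as the paper. Write $\widetilde X_{\tau_i}=r(\widetilde X_{\tau_i-},Z_i)$, freeze the $\cG_{\tau_i-}$-measurable argument using independence of $Z_i$, and identify $(\cR p)(\widetilde X_{\tau_i-})$ with $H_n(\widetilde X_{\tau_i-})^\top R_n\vec p$ via Lemma~\ref{lem:linear stochastic reset}. The paper only cites ``independence of marks'', so your proof of $\cG_{\tau_i-}\subseteq\cK$ and your truncation to generalized conditional expectations fill in steps it leaves implicit; the latter is genuinely needed, since $p(\widetilde X_{\tau_i})$ can fail to be integrable, e.g.\ for deterministic growth with an $F$ of infinite mean.
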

\begin{proof}
Using \eqref{eq:reset}, independence of marks, and Lemma~\ref{lem:linear stochastic reset}, we have
\begin{align*}
\bE_x\left[p(\widetilde X_{\tau_i}) \mid \cG_{\tau_i-}\right] &= \bE_x\left[p(r(\widetilde X_{\tau_i-}, Z_i)) \mid \cG_{\tau_i-}\right] \\
&= \int_\cZ p(r(\widetilde X_{\tau_i-}, z)) \mu(dz)  \\
&= (\cR\,p)(\widetilde X_{\tau_i-}) = H_n(\widetilde X_{\tau_i-})^\top R_n \vec p. \qedhere
\end{align*}
\end{proof}

\subsection{The moment formula for PUARs}
We show our main result, the moment formula for polynomial processes under affine resetting:

\begin{thm}[Moment formula for PUARs]\label{thm:moment formula 2}
Let $\widetilde X$ be a polynomial process under affine resetting (PUAR). Then, for any $n \in \NN_0$, there exists a kernel $K = K_n \colon\bR_+ \to \bR^{D(n) \times D(n)}$, such that
\begin{equation}
	\label{eq:moment_form2}
	\bE_x[p(\widetilde X_t)] = H(x)^\top K(t) \vec p,\qquad \forall\,t\geq0, p \in \cP_n
\end{equation}
%i.e. the moment formula holds true for any $p\in\cP_n$, $x\in S$, and $t\geq0$,
and $K$ is the unique locally bounded solution of the matrix renewal equation
	\begin{align}
		\label{eq:renewal equation}
		K(t)=e^{tA} \overline F(t) + \int_0^t e^{uA} R K(t-u)\,F(du), \quad t\geq0,
	\end{align}
    where $A$ is the generator matrix of $X$, $R=R_n$ is the reset matrix associated to the reset mechanism, $F$ the inter-arrival time distribution of resets, and $\bar F = 1 - F$.
    \end{thm}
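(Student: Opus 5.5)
The plan is a first-renewal decomposition at $\tau_1$. Define $m(t,x) := \bE_x[p(\widetilde X_t)]$ for $p\in\cP_n$; the first task is to show this is finite. Split on the events $\{t<\tau_1\}$ and $\{\tau_1\le t\}$. On $\{t<\tau_1\}$ we have $\widetilde X_t = X^{(1)}_t$ with $X^{(1)}\sim\bQ_x$ independent of $W_1$. Hence $\bE_x[p(\widetilde X_t)\ind{t<\tau_1}] = \overline F(t)\,P_tp(x) = \overline F(t)H(x)^\top e^{tA}\vec p$ by Theorem~\ref{thm:moment_formula_classic}.

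On $\{\tau_1\le t\}$ we condition on $\cG_{\tau_1}$ and apply Lemma~\ref{lem:regenerative} with $\Phi$ evaluating the path at time $t-\tau_1$. This gives $\bE_x[p(\widetilde X_t)\ind{\tau_1\le t}] = \bE_x[\ind{\tau_1\le t}\, m(t-\tau_1, \widetilde X_{\tau_1})]$. Since $\Phi$ must be bounded, I would first treat bounded truncations $p\wedge c$, or positive and negative parts, and pass to the limit by monotone convergence. Now suppose $m(s,y)=H(y)^\top K(s)\vec p$ for a candidate $K$. By Lemma~\ref{lem:conditional_reset} applied to each basis polynomial, together with independence of $W_1$, $X^{(1)}$ and $Z_1$, we get $\bE_x[H(\widetilde X_{\tau_1})^\top \mid \tau_1=u] = \bE_x[H(X^{(1)}_{u-})^\top]R = H(x)^\top e^{uA}R$. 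Here $X_{u-}=X_u$ a.s. for fixed $u$, by stochastic continuity of polynomial processes (Feller-type, or because $\bQ_x(\Delta X_u\neq 0)=0$ for fixed $u$). Integrating over $F(du)$ yields the renewal identity $m(t,x)=H(x)^\top[e^{tA}\overline F(t)+\int_0^t e^{uA}RK(t-u)F(du)]\vec p$.

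To make this rigorous without circularity, I would construct $K$ directly and then verify the identity. Let $K$ be the unique locally bounded solution of \eqref{eq:renewal equation}. Standard matrix renewal theory applies here: iterate $K=\sum_{j\ge0} G^{*j}*(e^{\cdot A}\overline F)$ with $G(du)=e^{uA}R\,F(du)$. The series converges on $[0,T]$ because $\|e^{uA}R\|\le C_T$ and $F^{*j}(T)$ decays geometrically, as $F$ has no atom at zero. Uniqueness then follows because the difference of two locally bounded solutions satisfies $D=G*D$, and iterating gives $\sup_{[0,T]}\|D\|\le C_T^j F^{*j}(T)\sup\|D\|\to0$.

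The probabilistic side follows the same expansion. Decompose $\bE_x[p(\widetilde X_t)]=\sum_j \bE_x[p(\widetilde X_t)\ind{N_t=j}]$ and show by induction on $j$, using Lemma~\ref{lem:regenerative} at $\tau_1$, that the $j$-th term equals $H(x)^\top (G^{*j}*(e^{\cdot A}\overline F))(t)\vec p$. Running the same computation with $|p|$ dominated by a polynomial such as $1+\|x\|^{2\lceil n/2\rceil}$ (an even-degree polynomial, which satisfies $|p|\le c(1+\|x\|^{2k})$) shows that the series of absolute values converges. This simultaneously justifies finiteness, the use of conditional expectations, and Fubini. I expect this integrability and induction step to be the main obstacle, namely the requirement that $\Phi$ be bounded, the $X_{u-}$ versus $X_u$ issue, and summability. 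The key point is that the matrix bound $C_T^jF^{*j}(T)$ controls everything uniformly in $x$ up to the factor $H(x)$.
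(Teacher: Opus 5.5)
Your proposal is correct and follows essentially the paper's proof. Both build $K=\sum_m K^{(m)}$ with $K^{(m)}=L*K^{(m-1)}$, and both get convergence and uniqueness from the decay of $C_T^m F^{*m}(T)$. Note that this needs $F^{*m}(T)$ to decay faster than $C_T^{-m}$, not merely geometrically; the paper gets this from $F^{*m}(t)\le e^{\theta t}(\bE[e^{-\theta W_1}])^m$ with $\theta$ large. Both then identify $\bE_x[p(\widetilde X_t)\mathds 1_{\{N_t=m\}}]=H(x)^\top K^{(m)}(t)\vec p$ by induction, using the regenerative property at $\tau_1$ and Lemma~\ref{lem:conditional_reset}. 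Your additional care with bounded $\Phi$, with $X_{u-}$ versus $X_u$, and with integrability via a nonnegative dominating polynomial addresses points the paper passes over silently.
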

    An application of Lemma~\ref{lem:regenerative} yields:
\begin{cor}[Conditional moment formula for PUARs]% \label{cor:moment formula_cond}
    At every reset time $\tau_i, i \in \NN$, the conditional moment formula
\begin{equation}
		% \label{eq:moment_form_cond}
	\bE_x[p(\widetilde X_{\tau_i + t}) \mid \cG_{\tau_i}] = H(\widetilde X_{\tau_i})^\top K(t) \vec p,\qquad \forall\,t\geq0, p \in \cP_n
\end{equation}
holds.
\end{cor}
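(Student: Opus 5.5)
The corollary is the regenerative property (Lemma~\ref{lem:regenerative}) combined with the unconditional moment formula of Theorem~\ref{thm:moment formula 2}. The only work is to apply Lemma~\ref{lem:regenerative} to a functional that is unbounded, since $p$ is a polynomial.

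Fix $i \in \NN$, $t \ge 0$ and $p \in \cP_n$. First treat bounded truncations. For $M>0$ consider
\[
\Phi_M(\omega,(s_k)_k) := \bigl(p(\omega(t)) \wedge M\bigr) \vee (-M),
\]
which is a bounded measurable functional on $\bD(\Rplus,S)\times\Rplus^\NN$ (only the path coordinate is used). Lemma~\ref{lem:regenerative} with $n=i$ gives
\[
\bE_x\bigl[\Phi_M\bigl((\widetilde X_{\tau_i+s})_{s\ge0}\bigr) \mid \cG_{\tau_i}\bigr] = g_M(\widetilde X_{\tau_i}), \qquad g_M(y) := \bE^{y}\bigl[\Phi_M\bigl((\widetilde X_s)_{s \ge 0}\bigr)\bigr],
\]
$\bP_x$-a.s. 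Next, splitting $p=p^+-p^-$, I would apply this separately to the truncations $p^\pm\wedge M$ and let $M\uparrow\infty$. Conditional monotone convergence on the left and monotone convergence on the right give
\[
\bE_x\bigl[p^\pm(\widetilde X_{\tau_i+t}) \mid \cG_{\tau_i}\bigr] = \bE^{\widetilde X_{\tau_i}}\bigl[p^\pm(\widetilde X_t)\bigr],
\]
as identities in $[0,\infty]$.

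By Theorem~\ref{thm:moment formula 2}, for every $y\in S$ the quantity $\bE^y[|p(\widetilde X_t)|]$ is finite, since the moment formula applies to $|p|\le 1+\sum$ of even monomials, or one simply reads off integrability from the theorem's assertion. Hence both right-hand sides are finite, and subtracting them yields
\[
\bE_x\bigl[p(\widetilde X_{\tau_i+t}) \mid \cG_{\tau_i}\bigr] = \bE^{\widetilde X_{\tau_i}}\bigl[p(\widetilde X_t)\bigr].
\]
Substituting the moment formula \eqref{eq:moment_form2} at $y=\widetilde X_{\tau_i}$ turns the right-hand side into $H(\widetilde X_{\tau_i})^\top K(t)\vec p$, which is the claim.

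The main, if mild, obstacle is the integrability needed to pass from the bounded functionals of Lemma~\ref{lem:regenerative} to polynomials. Specifically, one must check that $\bE^y|p(\widetilde X_t)|<\infty$ for every $y\in S$, so that the subtraction of the two conditional expectations is legitimate. I would take this from the proof of Theorem~\ref{thm:moment formula 2}: applying it to $q(x)=1+\|x\|^{2\lceil n/2\rceil}\in\cP_{2\lceil n/2\rceil}$, which dominates $|p|$ up to a constant, gives the required finiteness.
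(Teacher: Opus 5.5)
Your proof is correct and follows the paper's route: the paper only says the corollary is an application of Lemma~\ref{lem:regenerative}, i.e.\ apply it to the functional $\omega\mapsto p(\omega(t))$ and insert the moment formula \eqref{eq:moment_form2} at the point $\widetilde X_{\tau_i}$. Your truncation and monotone-convergence step, together with finiteness of $\bE^y[|p(\widetilde X_t)|]$ from a dominating polynomial, supplies the detail the paper leaves implicit, and it is genuinely needed: $p(\widetilde X_{\tau_i+t})$ need not be $\bP_x$-integrable (e.g.\ for heavy-tailed $F$), so the conditional expectation has to be read, as you do, as the difference of the two a.s.\ finite conditional expectations of $p^\pm$.
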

\begin{proof}[Proof of \Cref{thm:moment formula 2}]
	Let $p\in\cP_n$, $x\in S$, and $t\geq0$ be fixed.
%	For matrix-valued functions $U,V$ we write
%	\begin{align*}
%		(U*V)(t)\coloneqq \int_0^t U(u)V(t-u)\,du.
%	\end{align*}
	Set $G(t)\coloneqq \overline F(t)e^{tA}$, define $L(du)\coloneqq e^{uA}R\,F(du)$, 
	%$L(t)\coloneqq e^{tA}R F(t)$
	and $K^{(0)}(t)\coloneqq G(t)$, and for $m\in\bN$, set 
	\begin{align*}
		K^{(m)}(t)\coloneqq (L*K^{(m-1)})(t) = \int_0^t e^{uA} R K^{(m-1)}(t-u) F(du).
	\end{align*}
	We first show that $\sum_{m=0}^\infty K^{(m)}(t)$ converges absolutely. Fix $T>0$ and set
	\begin{align*}
		M_T\coloneqq \sup_{0\leq s\leq T}\|e^{sA}\|<\infty,\quad c_T\coloneqq M_T\|R\|<\infty
	\end{align*}
    for some matrix norm $\norm{\cdot}$.
	Then, for any $t\leq T$ and $m\in\bN$, %writing $s_m=\sum_{i=1}^mu_i$ and $N_t=\sum_{i\in\bN}\mathds 1_{\{\tau_i\leq t\}}$,
we get
	\begin{align*}
		\|K^{(m)}(t)\| &\leq M_Tc_T^m F^{*m}(t),
	\end{align*}
    where $F^{*m}(t)$ is the $m$-th convolution power of $F$; cf. \cite[Ch.~V \S 4]{Feller71}. Note that $F^{*m}(t)$ is the distribution function of $\tau_m = W_1 + \dotsm + W_m$. Therefore for any $\theta>0$, by Markov's inequality,
	\begin{align*}
		F^{*m}(t) = \mathbb{P}_x(\tau_m \le t) = \bP_x(e^{-\theta \tau_m}\ge e^{-\theta t}) \leq e^{\theta t}\left(\bE[e^{-\theta W_1}]\right)^m,\quad m\in\bN_0,
	\end{align*}
	where we used that $(W_i)_{i\in\bN}$ are i.i.d. Since $W_1>0$ a.s., it holds that $\bE[e^{-\theta W_1}]\to0$ as $\theta\to\infty$. Choosing $\theta$ such that $c_T\bE[e^{-\theta W_1}]<1$ yields
	\begin{align*}
		\sum_{m=0}^\infty \|K^{(m)}(t)\| &\leq M_T\sum_{m=0}^\infty c_T^m F^{*m}(t)\\
		&\leq M_T\left(1+e^{\theta t}\sum_{m=1}^\infty \left(c_T\bE[e^{-\theta W_1}]\right)^m\right) <\infty.
	\end{align*}
	Thus $K(t)\coloneqq \sum_{m=0}^\infty K^{(m)}(t)$ defines a finite matrix kernel.\par 
	We next identify $K(t)$ as a solution of \eqref{eq:renewal equation}. We claim that
	\begin{align}
		\label{eq:m_reset_contribution}
		\bE_x[p(\widetilde X_t)\mathds 1_{\{N_t=m\}}] =H(x)^\top K^{(m)}(t)\vec p,\quad m\in\bN_0.
	\end{align}
	For $m=0$, this follows from
    \begin{align*}
    \bE_x[p(\widetilde X_t)\mathds 1_{\{N_t=0\}}] &= \bE_x[p(\widetilde X_t)\mathds 1_{\{\tau_1 > t\}}] \\
    &= \bE_x[p(X_t)] \bP_x(\tau_1 > t) = H(x)^\top e^{At} \bar F(t).
    \end{align*}
    Here, we have used that $\widetilde X_t=X_t$ on $\{\tau_1>t\}$, the independence of $\tau_1$ and $X$, and \eqref{eq:moment formula with A}. We proceed by induction. Let $m\in\bN$ and suppose that \eqref{eq:m_reset_contribution} holds for $m-1$. Since $N_t=m$ implies $\tau_1\leq t$, and $\tau_1$ is $\cG_{\tau_1}$-measurable, we obtain
	\begin{align*}
		\bE_x[p(\widetilde X_t)\mathds 1_{\{N_t=m\}}] &= \bE_x[ p(\widetilde X_t)\mathds 1_{\{N_t=m\}}\mathds 1_{\{\tau_1\le t\}}] \\
		&= \bE_x\left[ \mathds 1_{\{\tau_1\le t\}} \bE_x[ p(\widetilde X_t)\mathds 1_{\{N_t=m\}} \mid \cG_{\tau_1} ] \right] \\
		%&=\bE_x[\mathds 1_{\{\tau_1\le t\}}\bE_{r(X_{\tau_1^-},Z_1)}[p(\widetilde X_{t-\tau_1})\mathds 1_{\{N_{t-\tau_1}=m-1\}}]],
		&= \bE_x\left[ \mathds 1_{\{\tau_1\le t\}} \bE_{\widetilde X_{\tau_1}} [p(\widetilde X_{t-\tau_1})\mathds 1_{\{N_{t-\tau_1}=m-1\}}]]\right],
	\end{align*}
	where we have used Lemma~\ref{lem:regenerative} in the last step. Using the induction hypothesis and Lemma~\ref{lem:conditional_reset}, we continue with
	%Conditioning on the first reset time and using the renewal property after $\tau_1$ gives,
	\begin{align*}
		\bE_x[p(\widetilde X_t)\mathds 1_{\{N_t=m\}}] &=\bE_x\left[\mathds 1_{\{\tau_1\le t\}}  H(\widetilde X_{\tau_1})^\top  K^{(m-1)}(t-\tau_1)\right] \vec p \\
        &= \bE_x\left[\mathds 1_{\{\tau_1\le t\}}  H(X_{\tau_1 - })^\top  R K^{(m-1)}(t-\tau_1)\right] \vec p \\
        &= H(x)^\top \bE_x\left[\mathds 1_{\{\tau_1\le t\}}  e^{A\tau_1}  R K^{(m-1)}(t-\tau_1)\right] \vec p \\
		&=H(x)^\top\left(\int_0^t e^{uA}R K^{(m-1)}(t-u)\,F(du)\right)\vec p\\
		&=H(x)^\top K^{(m)}(t)\vec p.
	\end{align*}
	Summing \eqref{eq:m_reset_contribution} over $m\in\bN_0$ and using the absolute convergence yields \eqref{eq:moment_form2}. Finally,
	\begin{align*}
		K(t) &=G(t)+\sum_{m=1}^\infty (L*K^{(m-1)})(t)\\
		&=G(t)+(L*K)(t),
	\end{align*}
	which is precisely \eqref{eq:renewal equation}.\par
	It remains to prove uniqueness. Let $\widetilde K$ be another locally bounded solution of \eqref{eq:renewal equation} and set $D=K-\widetilde K$. Then
    \begin{equation}\label{eq:D_convolution}
    D(t) = \int_0^t e^{uA} R\,D(t-u) F(du).
    \end{equation}
    For fixed $T>0$ set $D_T=\sup_{0\leq s\leq T}\|D(s)\|<\infty$. Choosing $\theta$ such that $c_T\bE[e^{-\theta W_1}]<1$ and iterating \eqref{eq:D_convolution}, we obtain
	\begin{align*}
		\|D(t)\| \leq D_Tc_T^m F^{*m}(t) = D_Tc_T^m \bP(\tau_m \le t) \leq D_Te^{\theta T}\left(c_T\bE[e^{-\theta W_1}]\right)^m,
	\end{align*}
	for all $t\leq T, m \in \NN$. Letting $m\to\infty$ gives $D(t)=0$, and uniqueness follows.
\end{proof}

Under Poissonian resetting the solution $K(t)$ of \eqref{eq:renewal equation} can be written explicitly as a matrix exponential as in the classical moment formula \eqref{eq:moment formula with A}.

\begin{cor}[Poissonian resetting]\label{cor:poisson_renewal_times}
	In the setting of Theorem~\ref{thm:moment formula 2}, assume Poissonian resetting at rate $\lambda > 0$, i.e., $F(t)=1 - e^{-\lambda t}$. Then
    \begin{equation}\label{eq:exponential_kernel}
    K(t) = \exp\left(t\big(A + \lambda (R - I)\big)\right),
    \end{equation}
and the moment formula becomes
\begin{align*}
		\bE_x[p(\widetilde X_t)] = H(x)^\top e^{t(A+\lambda(R-I))}\vec p
	\end{align*}
for all $p\in\cP_n, x\in S$ and $t \geq 0$.
\end{cor}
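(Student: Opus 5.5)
By the uniqueness part of \Cref{thm:moment formula 2}, it suffices to show that $\widetilde K(t) := \exp\bigl(t(A+\lambda(R-I))\bigr)$ is a locally bounded solution of the renewal equation \eqref{eq:renewal equation} with $F(du) = \lambda e^{-\lambda u}\,du$ and $\overline F(t) = e^{-\lambda t}$. Local boundedness is immediate, since $\widetilde K$ is continuous. Once $\widetilde K$ is shown to solve \eqref{eq:renewal equation}, it coincides with $K$, and the moment formula follows from \eqref{eq:moment_form2}.

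To verify the renewal equation, I will not substitute $\widetilde K$ directly, because $A$ and $R$ need not commute. Instead I rewrite \eqref{eq:renewal equation} in the exponential case by the substitution $s = t-u$:
\[
K(t) = e^{-\lambda t}e^{tA} + \lambda\int_0^t e^{-\lambda(t-s)} e^{(t-s)A} R\, K(s)\,ds = e^{t(A-\lambda I)}\Bigl(I + \lambda\int_0^t e^{-s(A-\lambda I)} R K(s)\,ds\Bigr).
\]
This uses that $A$ commutes with $\lambda I$. It shows that any continuous solution is $C^1$. Differentiating then gives the linear ODE
\[
K'(t) = (A-\lambda I)K(t) + \lambda R K(t) = \bigl(A+\lambda(R-I)\bigr)K(t), \qquad K(0)=I.
\]
Its unique solution is $\widetilde K$.

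More cleanly, I run the argument in reverse to avoid regularity issues. Set $B := A-\lambda I$. Then $\widetilde K$ satisfies $\widetilde K' = B\widetilde K + \lambda R\widetilde K$ with $\widetilde K(0) = I$. By variation of constants, $\frac{d}{ds}\bigl(e^{-sB}\widetilde K(s)\bigr) = \lambda e^{-sB}R\widetilde K(s)$. Integrating from $0$ to $t$ and multiplying by $e^{tB}$ yields exactly the displayed integral form, which is \eqref{eq:renewal equation}. Uniqueness from \Cref{thm:moment formula 2} then gives $K = \widetilde K$.

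The only delicate point is the noncommutativity of $A$ and $R$. It is handled by keeping $R$ inside the integral and pulling out only $e^{tA}$ and the scalar factor $e^{-\lambda t}$, which commute with each other. The rest is the routine variation-of-constants computation.
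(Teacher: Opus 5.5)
Your proposal is correct and follows essentially the paper's route: with $B=A-\lambda I$, you rewrite the exponential-case renewal equation as $K(t)=e^{tB}\bigl(I+\lambda\int_0^t e^{-sB}RK(s)\,ds\bigr)$, check that the continuous (hence locally bounded) candidate $e^{t(A+\lambda(R-I))}$ satisfies it, and then invoke uniqueness from Theorem~\ref{thm:moment formula 2}. The only cosmetic difference is in the verification: the paper differentiates the defect $D(t)$ and uses $D'=(A-\lambda I)D$ with $D(0)=0$, whereas you integrate $\frac{d}{ds}\bigl(e^{-sB}\widetilde K(s)\bigr)=\lambda e^{-sB}R\widetilde K(s)$ directly, which is the same computation.
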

\begin{proof}
We have to show that $K(t) = \exp(t(A + \lambda(R - I)))$ solves \eqref{eq:renewal equation} when $F(t) = 1 - e^{-\lambda t}$; by Theorem~\ref{thm:moment formula 2} any such solution must be the unique solution. Subtracting the right-hand side from the left-hand side we obtain
\begin{align*}
D(t) &= K(t) - e^{t(A-\lambda I)} - \lambda e^{t(A-\lambda I)}\int_0^te^{-u(A-\lambda I)}RK(u)du.
\end{align*}
Taking derivatives, and using that $K'(t) = (A-\lambda I + \lambda R)K(t)$ we obtain
\begin{align*}
D'(t) &= (A-\lambda I + \lambda R)K(t) - (A-\lambda I) e^{(A-\lambda I)t} -\lambda RK(t) \\
&\phantom{=} - (A-\lambda I) \lambda  e^{t(A-\lambda I)}\int_0^te^{-u(A-\lambda I)}RK(u)du \\
&= (A-\lambda I) D(t).
\end{align*}
Together with the initial condition $D(0) = 0$, this ODE has the unique solution $D \equiv 0$, showing that \eqref{eq:exponential_kernel} solves \eqref{eq:renewal equation}.
\end{proof}
Several alternative proofs for the above result can be given, using for example the variation-of-constants formula \eqref{eq:nagel_variation_formula}, or the generator representation \eqref{eq:pois_generator}.

\begin{example}[Growth--collapse process with Poisson jump times and uniform cutoff]
% \label{ex:growth-collapse}
We continue Example~\ref{ex:MPUR}(i) of the growth--collapse process considered in \cite{Boxma06}. The generator of the constant-growth process
$X_t = x + c t$
is
\begin{align*}
	\cA f(x) = c\,f'(x)
\end{align*}
which maps $\cP_n$ to $\cP_n$ for any $n \in \NN_0$. Hence, $X$ is a polynomial process, and its generator matrix expressed in the basis $H_n(x) = (1,x,\dots,x^n)^\top$ is
\begin{align*}
	(A_n)_{k-1,k} = c\,k,\quad k=1,\dots,n,
\end{align*}
with all other entries equal to $0$. Let $\widetilde X=(\widetilde X_t)_{t\geq0}$ denote the associated PUAR with affine reset mechanism $(r,\cZ, \mu)$ where
\begin{align*}
	r(x,z) = z x,\quad x\in S,
\end{align*}
$\cZ = [0,1]$ and $\mu$ is the uniform distribution on $[0,1]$. Then the associated reset matrix is
\begin{align*}
	R_n = \diag(1,u_1,\dots,u_n), \quad u_k \coloneqq  \int_0^1 z^k\,dz = \frac{1}{k+1},\quad k=1,\dots,n,
\end{align*}
where the $u_k$ are the moments of the uniform distribution on $[0,1]$.
%since in the one-dimensional case $D^*(k)=1$ for all $k$.
We consider Poissonian resetting with rate $\lambda>0$. Then, by \Cref{cor:poisson_renewal_times}, for $p\in P_n$ and $x\in S$, %kernel of the PPLSR $\widetilde X$ is
%\begin{align*}
%  K_n(t) = \exp(t\,(A_n+\lambda(R_n-I))),\quad t\ge0.
%\end{align*}
%Hence the moment formula~ gives,
\begin{equation}
		% \label{eq:moment-gc}
	\bE_x[p(\widetilde X_t)] = H_n(x)^\top\,e^{tB_n}\,\vec p,\quad t\geq0,
\end{equation}
where $B_n=A_n+\lambda(R_n-I)$ with
\begin{align*}
	B_n = \begin{bmatrix} 0 & c & 0 & \cdots & 0 \\
	0 & -\frac{\lambda }{2} & 2c & \cdots & 0 \\
	0 & 0 & -\frac{2\lambda}{3} & \ddots & \vdots \\
	\vdots & \vdots & \ddots & \ddots & n c \\
	0 & 0 & \cdots & 0 & -\frac{\lambda n}{n+1} \end{bmatrix}.
\end{align*}
The moments $m_k(t) = \bE_x[\widetilde X^k]$ of $\widetilde X$ (with initial value $\widetilde X_0 = x$) can now be computed from
\begin{align*}
	%= \bE_x\bigl[h_k(\widetilde X_t)\bigr]
	m_k(t)  = \bE_x[\widetilde X_t^k] = H_n(x)^\top e^{t B_n}\, e_{k},\quad t\ge0.
\end{align*}
Differentiating this formula leads to a recursive scheme of computing these moments by
\begin{equation*}
	m_k(t) = e^{-\frac{\lambda k}{k+1}t}\,x^k + c\,k\int_0^t e^{-\frac{\lambda k}{k+1}(t-s)} m_{k-1}(s)\,ds, \quad 1\le k\le n,
\end{equation*}
starting with $m_0(t) = 1$.\\
Choosing $x=0$ and $c=1$, we obtain
\begin{align*}
m_1(t) &= \frac{2}{\lambda}\left(1 - e^{-\lambda t/2}\right)\\
m_2(t) &= \frac{3}{2\lambda^2}\left(4 - 16 e^{-\lambda t/2} + 12e^{-2\lambda t/3}\right)\\
\intertext{and thus}
\var(\widetilde X) &= \frac{1}{\lambda^2}\left(2 - 16 e^{-\lambda t/2} + 18e^{-2\lambda t/3} - 4e^{-\lambda t}\right),
\end{align*}
consistent with \cite[pp.~232-235]{Boxma06}, where the same values are obtained by different methods, taking up four pages of calculations.
\end{example}
\begin{example}[Bivariate Brownian motion with total reset]
% \label{ex:bm-2d-total-reset}
Let $(B_1(t), B_2(t))_{t \geq 0}$ be a bivariate standard Brownian motion, and write $X_t = (x + B_1(t), y + B_2(t))$ for the Brownian motion starting from $(x,y) \in \RR^2$. Then $X$ is a polynomial process with generator
\begin{equation}\label{eq:generator_BM}
	\cA f(x,y)=\frac12 \left(\frac{\partial^2 f}{\partial x^2} + \frac{\partial^2 f}{\partial y^2} \right)
\end{equation}
and we set $A_n\coloneqq \cA|_{\cP_n}$ for $n\in\bN$ with respect to a graded monomial basis $H_n$. By straightforward calculations, we obtain that
\[\bE_{(x,y)}[X_t] = \begin{pmatrix}x\\y\end{pmatrix}, \qquad \cov_{(x,y)}(X_t) = \begin{pmatrix}t & 0 \\ 0 & t \end{pmatrix}.\]
We want to determine how this moment structure changes, when we add total resetting ($r \equiv 0$) at Poissonian reset times. The corresponding reset matrix is $R = e_0 e_0^\top$, where $e_0 = (1,0,0,\dotsc)^\top$. By \Cref{cor:poisson_renewal_times}, for $p\in \cP$ and $x\in S$,
\begin{equation}
	\label{eq:moment-bm}
	\bE_{(x,y)}[p(\widetilde X_t)] = H(x,y)^\top\,e^{t\,B}\,\vec p, \quad t\ge0,
\end{equation}
where $B = A + \lambda (e_0 e_0^\top - I)$. For degree $n=2$ we fix the basis $H_2(x,y) = (1,x,y,x^2, xy, y^2)^\top$. Then the generator matrix $A = A_2$ is easily calculated from \eqref{eq:generator_BM} has only two non-zero entries with
\begin{align*}
	A_{03} = A_{05} = 1, \quad A_{ij} = 0 \ \text{for all other }i\text{ and }j.
\end{align*}
Consequently, the matrix $B = A +\lambda(e_0 e_0\top-I)$ is given as
\begin{align*}
	B=\begin{bmatrix} 0 & 0 & 0 & 1 & 0 & 1 \\
	0 & -\lambda & 0 & 0 & 0 & 0 \\
	0 & 0 & -\lambda & 0 & 0 & 0 \\
	0 & 0 & 0 & -\lambda & 0 & 0 \\
	0 & 0 & 0 & 0 & -\lambda & 0 \\
	0 & 0 & 0 & 0 & 0 & -\lambda \end{bmatrix}.
\end{align*}
Using the moment formula \eqref{eq:moment-bm} and evaluating the matrix exponential we obtain the raw moments of $\widetilde X$ as
\begin{multline*}
H(x,y)^\top e^{Bt} \\
= \left(1, xe^{-\lambda t}, ye^{-\lambda t}, x^2 e^{-\lambda t} + \frac{1}{\lambda}(1 - e^{-\lambda t}), xy e^{-\lambda t}, y^2 e^{-\lambda t} + \frac{1}{\lambda}(1 - e^{-\lambda t})\right).
\end{multline*}
From this, we have
\begin{align*}
	%  \bE_x\bigl[\widetilde X_t^{(i)}\bigr]
	%    &= x_i\,e^{-\lambda t},\quad i=1,2,\\
	\bE_{(x,y)}[\widetilde X_t] &= \begin{pmatrix}x \\y \end{pmatrix} e^{-\lambda t}\quad \text{and} \\
    \cov_{(x,y)}(\widetilde X_t) &= e^{-\lambda t}(1 - e^{-\lambda t})\begin{pmatrix}x^2 & xy \\ xy & y^2 \end{pmatrix} + \frac{1 - e^{-\lambda t}}{\lambda} \begin{pmatrix} 1& 0 \\ 0 & 1 \end{pmatrix}
\end{align*}
for the bivariate Brownian motion with resetting to zero. In particular, the components of $\widetilde X$ are no longer uncorrelated when $(x,y) \neq 0$.
\end{example}

\subsection{The cross-moment formula for PUARs}
We derive a formula for the cross-moments of a polynomial process under affine resetting, i.e., for joint moments at two different times $t$ and $t+h$. Recall that for a given order $n \in \NN_0$, $H_n$ represents a graded monomial basis of $\cP_n$, $A_n$ the generator matrix of the Markov process $X$, and $R_n$ the reset matrix. Moreover, $M_q$ is the multiplication matrix associated to the polynomial $q$, see \Cref{sec:polynomial_notation}.

\begin{thm}[Cross-moment formula]\label{cor:cross moment 2}
Let $k,n \in \NN_0$. For all $p\in\cP_n$, $q \in \cP_k$ and $x\in S$,
\begin{align*}
	\bE_x\!\left[ q(\widetilde X_t)p(\widetilde X_{t+h}) \right] = H_{n+k}(x)^\top C_{q,h}(t)\vec p,\quad t,h\geq0,
\end{align*}
where $C_{q,h}\colon\bR_+\to\bR^{D(n+k)\times D(n)}$ is the unique locally bounded kernel satisfying
\begin{align}
	\label{eq:correct-cross-moment-renewal}
	C_{q,h}(t) = G_{q,h}(t) + \int_0^t e^{uA_{n+k}}R_{n+k}C_{q,h}(t-u)\,F(du),
\end{align}
for every $h\ge0$, where
\begin{align}
\label{eq:cross-source-fixed-lag}
G_{q,h}(t)
&=
e^{tA_{n+k}}M_qe^{hA_n}\overline F(t+h) \\\nonumber
&\quad+
e^{tA_{n+k}}M_q
\int_{(t,t+h]}
    e^{(u-t)A_n}R_nK_n(t+h-u)\,F(du).
\end{align}
and $K_n(t)$ is the kernel from \Cref{thm:moment formula 2}.
\end{thm}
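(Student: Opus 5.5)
The plan mirrors the proof of \Cref{thm:moment formula 2}, with a renewal argument on the first reset time $\tau_1$. Fix $h\ge0$ and define $\Psi(t) := \bE_x[q(\widetilde X_t)p(\widetilde X_{t+h})]$. I split according to whether $\tau_1>t$ or $\tau_1\le t$.

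On $\{\tau_1\le t\}$, apply Lemma~\ref{lem:regenerative} with the bounded functional (after truncation and monotone convergence, justified by the finiteness of moments) $\Phi(\omega)=q(\omega(t-s))p(\omega(t-s+h))$ evaluated at $s=\tau_1$. This gives
\[\bE_x[\ind{\tau_1\le t}\,\Psi^{\widetilde X_{\tau_1}}(t-\tau_1)],\]
where $\Psi^y$ denotes $\Psi$ started from $y$. Suppose $\Psi^y(s)=H_{n+k}(y)^\top C(s)\vec p$. Lemma~\ref{lem:conditional_reset} (in degree $n+k$) replaces $H_{n+k}(\widetilde X_{\tau_1})$ by $H_{n+k}(X_{\tau_1-})^\top R_{n+k}$. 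Independence of $\tau_1$ from $X$, together with \eqref{eq:moment formula with A} and $X_{u-}=X_u$ a.s. for fixed $u$, then yields $H_{n+k}(x)^\top\int_0^t e^{uA_{n+k}}R_{n+k}C(t-u)F(du)\vec p$. This is exactly the convolution term in \eqref{eq:correct-cross-moment-renewal}.

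On $\{\tau_1>t\}$, I split further as $\{\tau_1>t+h\}$ or $\{t<\tau_1\le t+h\}$.
\begin{itemize}
\item On $\{\tau_1>t+h\}$, both values come from the unreset segment $X$. Conditioning on $\cF_t$ and using \eqref{eq:polynomial_moment_classic} gives $\bE[p(X_{t+h})\mid\cF_t]=H_n(X_t)^\top e^{hA_n}\vec p$. Multiplying by $q(X_t)$ via \eqref{eq:matrix_Mq} gives $H_{n+k}(X_t)^\top M_q e^{hA_n}\vec p$. Taking expectations and using the independence of $\tau_1$ gives $H_{n+k}(x)^\top e^{tA_{n+k}}M_qe^{hA_n}\overline F(t+h)\vec p$.
\item On $\{t<\tau_1\le t+h\}$, condition first on $\cG_{\tau_1}$. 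The regenerative property and Theorem~\ref{thm:moment formula 2} give $\bE[p(\widetilde X_{t+h})\mid\cG_{\tau_1}]=H_n(\widetilde X_{\tau_1})^\top K_n(t+h-\tau_1)\vec p$. Lemma~\ref{lem:conditional_reset} then turns this into $H_n(X_{\tau_1-})^\top R_nK_n(t+h-\tau_1)\vec p$. Here $q(\widetilde X_t)=q(X_t)$ is $\cG_{\tau_1-}$-measurable, which is needed for this step.
\item Integrating over $\tau_1=u\in(t,t+h]$, condition on $X_t$. By the Markov property, $\bE[H_n(X_u)^\top\mid\cF_t]=H_n(X_t)^\top e^{(u-t)A_n}$. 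Multiplying by $q(X_t)$ gives $M_q$, and taking expectations gives $e^{tA_{n+k}}$.
\end{itemize}
These two terms together give $G_{q,h}$ in \eqref{eq:cross-source-fixed-lag}.

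The step I expect to be the main obstacle is avoiding circularity in assuming the representation $\Psi^y=H_{n+k}(y)^\top C(\cdot)\vec p$. I would handle it as in the moment-formula proof. Decompose $\Psi(t)=\sum_m\bE_x[q(\widetilde X_t)p(\widetilde X_{t+h})\ind{N_t=m}]$ and prove by induction on $m$ that the $m$-th term equals $H_{n+k}(x)^\top C^{(m)}(t)\vec p$, where $C^{(0)}=G_{q,h}$ and $C^{(m)}=L*C^{(m-1)}$ with $L(du)=e^{uA_{n+k}}R_{n+k}F(du)$. The case $m=0$ is the computation on $\{\tau_1>t\}$ above, and the induction step is the $\tau_1\le t$ computation. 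Note that $N_t=m$ with $\tau_1\le t$ corresponds to $N_{t-\tau_1}=m-1$ after the shift.

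To finish:
\begin{itemize}
\item $G_{q,h}$ is locally bounded in $t$, since $K_n$ is locally bounded.
\item The bounds $\|C^{(m)}(t)\|\le \sup_{s\le T}\|G_{q,h}(s)\|\,c_T^mF^{*m}(t)$ and the exponential-tilt estimate give absolute convergence of $C=\sum_m C^{(m)}$, so $C$ solves \eqref{eq:correct-cross-moment-renewal}.
\item Uniqueness among locally bounded solutions follows verbatim from the uniqueness argument for \eqref{eq:renewal equation}, with $A,R$ replaced by $A_{n+k},R_{n+k}$.
\item Integrability of all products $q(\widetilde X_t)p(\widetilde X_{t+h})$ needed for Fubini follows from Cauchy--Schwarz and the same series bound applied to $\bE_x[|\,\cdot\,|]$ of polynomials of degree $2\max(n,k)$.
\end{itemize}
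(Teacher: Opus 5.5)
Your proposal is correct and follows essentially the same route as the paper. Both arguments decompose over $\{N_t=m\}$ and induct on $m$, handle the case $m=0$ by splitting into $\{\tau_1>t+h\}$ and $\{t<\tau_1\le t+h\}$, and finish with the same series bound and uniqueness argument; your remarks on truncation and Cauchy--Schwarz integrability only add details the paper leaves implicit.
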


\begin{proof}
Fix $p\in\cP_n$, $q \in \cP_k$, $x\in S$, and $h\geq0$. Set $L_{n+k}(du)\coloneqq e^{uA_{n+k}}R_{n+k}F(du)$ for $u\geq0$ and
\begin{align*}
	C_{q,h}^{(0)}(t) &\coloneqq G_{q,h}(t),\\
	C_{q,h}^{(m)}(t) &\coloneqq (L_{n+k}*C_{q,h}^{(m-1)})(t)= \int_0^t e^{uA_{n+k}}R_{n+k} C_{q,h}^{(m-1)}(t-u)\,F(du),
\end{align*}
for each $m\in\bN$. Analogous to the proof of \Cref{thm:moment formula 2} we claim that
\begin{align}
	\label{eq:cross-m-reset-contribution}
	\bE_x\!\left[ q(\widetilde X_t)p(\widetilde X_{t+h}) \mathds 1_{\{N_t=m\}} \right] = H_{n+k}(x)^\top C_{q,h}^{(m)}(t)\vec p, \qquad m\in\bN_0.
\end{align}
For $m=0$, the event $\{N_t=0\}$ is, up to a null set, the disjoint union
\begin{align*}
	\{\tau_1>t+h\} \cup \{t<\tau_1\leq t+h\}.
\end{align*}
On $\{\tau_1>t+h\}$, by independence of $\tau_1$ and $X$, the conditional moment formula \eqref{eq:polynomial_moment_classic} for the polynomial process $X$, and the multiplication identity \eqref{eq:matrix_Mq} we have
\begin{align}
	\bE_x\!\left[ q(\widetilde X_t)p(\widetilde X_{t+h}) \mathds 1_{\{\tau_1>t+h\}} \right] &= \bE_x[q(X_t)p(X_{t+h})] \overline F(t+h) \nonumber\\
	&= \bE_x\!\left[ q(X_t) \bE_x[p(X_{t+h})\mid X_t] \right] \overline F(t+h) \nonumber\\
	&=  \bE_x\!\left[ q(X_t)H_n(X_t)^\top e^{hA_n} \right] \vec p\,\overline F(t+h)\nonumber\\
		% \label{eq:cross-no-reset-until-second-time}
	&= H_{n+k}(x)^\top e^{tA_{n+k}}M_qe^{hA_n}\vec p \, \overline F(t+h).
\end{align}
On $\{t<\tau_1\leq t+h\}$, applying \Cref{thm:moment formula 2}, \Cref{lem:conditional_reset}, and \Cref{thm:moment_formula_classic} gives
\begin{align*}
	&\bE_x\!\left[ q(\widetilde X_t)p(\widetilde X_{t+h}) \mathds 1_{\{t<\tau_1\leq t+h\}} \right] \nonumber\\
	%\label{eq:cross-first-reset-after-s}
    &\quad= \bE_x\!\left[ q(\widetilde X_t) \bE_x\!\left[p(\widetilde X_{t+h}) \mid \cG_{\tau_1}\right] \mathds 1_{\{t<\tau_1\leq t+h\}} \right] \nonumber \\
    &\quad =\bE_x\!\left[ q(\widetilde X_t) H(\widetilde X_{\tau_1})^\top K_n(t+h-\tau_1) \mathds 1_{\{t<\tau_1\leq t+h\}} \right] \nonumber \\
	&\quad= \bE_x\!\left[ q(X_t) H(X_{\tau_1-})^\top R_n \,K_n(t+h-\tau_1) \mathds 1_{\{t<\tau_1\leq t+h\}} \right] \nonumber \\
    &\quad= \int_{(t,t+h]} \bE_x\!\left[ q(X_t) H(X_{u})^\top R_n\,K_n(t+h-u) \right] F(du) \nonumber \\
    &\quad= \int_{(t,t+h]} \bE_x\!\left[ q(X_t) H(X_{t})^\top e^{(u-t)A_n} R_n\,K_n(t+h-u) \right] F(du) \nonumber \\
    &\quad= H_{n+k}(x)^\top e^{tA_{n+k}} M_q \int_{(t,t+h]} e^{(u-t)A_n} R_n\,K_n(t+h-u) F(du). \nonumber 
    %&\quad= H_{n+k}(x)^\top e^{tA_{n+k}} M_q \int_{(t,t+h]}  e^{uA_n} R_n\,K_n(h-u) dF(t+u). \nonumber \\
\end{align*}
Adding the last two equations together, gives
\begin{align*}
	\bE_x\!\left[ q(\widetilde X_t)p(\widetilde X_{t+h}) \mathds 1_{\{N_t=0\}} \right] = H_{n+k}(x)^\top G_{q,h}(t)\vec p.
\end{align*}
Thus \eqref{eq:cross-m-reset-contribution} holds for $m=0$ and we proceed by induction. Let $m\in\bN$ and suppose that \eqref{eq:cross-m-reset-contribution} holds for $m-1$. Since $N_t=m$ implies $\tau_1\le t$, all subsequent expectations can be understood with an additional factor
$\mathds 1_{\{\tau_1\le t\}}$ inside them. Then, conditioning on the first reset time, using the induction hypothesis, and applying the moment formula, gives
\begin{align*}
	&\bE_x\!\left[q(\widetilde X_t)p(\widetilde X_{t+h})\mathds 1_{\{N_t=m\}}\right]\\
	&\quad = \bE_x\!\left[\bE_x\!\left[q(\widetilde X_t)p(\widetilde X_{t+h})  \mathds 1_{\{N_t=m\}} \mid \cG_{\tau_1} \right]\right]\\
    &\quad = \bE_x\!\left[\bE_{\widetilde X_{\tau_1}}\!\left[q(\widetilde X_{t-\tau_1})p(\widetilde X_{t+h-\tau_1}) \mathds 1_{\{N_{t-\tau_1}=m-1\}}\right]\right]\\
&\quad = \bE_x\! \left[H_{n+k}(\widetilde X_{\tau_1})^\top C_{q,h}^{(m-1)}(t-\tau_1)\right] \vec p\\
&\quad = \bE_x\! \left[H_{n+k}(X_{\tau_1-})^\top R_{n+k} C_{q,h}^{(m-1)}(t-\tau_1)\right] \vec p\\
&\quad = H_{n+k}(x)^\top \bE_x\! \left[e^{\tau_1 A_{n+k}} R_{n+k} C_{q,h}^{(m-1)}(t-\tau_1)\right] \vec p\\
&\quad = H_{n+k}(x)^\top \int_0^t e^{u A_{n+k}} R_{n+k} C_{q,h}^{(m-1)}(t-u) F(du) \,\vec p \\
&\quad = H_{n+k}(x)^\top C_{q,h}^{(m)}(t) \vec p.
\end{align*}
Hence \eqref{eq:cross-m-reset-contribution} holds for every $m\in\bN_0$. The same estimates as in the proof of \Cref{thm:moment formula 2}, with $G$, $L$, and $K^{(m)}$ replaced by $G_{q,h}$, $L_{n+k}$, and $C_{q,h}^{(m)}$, respectively, show that
\begin{align*}
	C_{q,h}(t) \coloneqq \sum_{m=0}^{\infty}C_{q,h}^{(m)}(t)
\end{align*}
is finite and locally bounded. Summing \eqref{eq:cross-m-reset-contribution} over $m\in\bN_0$ therefore gives
\begin{align*}
	\bE_x\!\left[ q(\widetilde X_t)p(\widetilde X_{t+h}) \right] = H_{n+k}(x)^\top C_{q,h}(t)\vec p.
\end{align*}
Moreover,
\begin{align*}
	C_{q,h}(t) &= G_{q,h}(t) + \sum_{m=1}^{\infty} (L_{n+k}*C_{q,h}^{(m-1)})(t) \nonumber\\
	&= G_{q,h}(t) + (L_{n+k}*C_{q,h})(t),
\end{align*}
which is precisely \eqref{eq:correct-cross-moment-renewal}. Finally, uniqueness among locally bounded solutions follows by the same argument as in the proof of \Cref{thm:moment formula 2}, applied to
\begin{align*}
	C_{q,h}=G_{q,h}+L_{n+k}*C_{q,h}.\qedhere
\end{align*}

%Setting $h=t-s$ proves \eqref{eq:correct-cross-moment-formula}.
\end{proof}
\begin{cor}[Poissonian resetting]
	In the setting of \Cref{cor:cross moment 2}, assume Poissonian resetting at rate $\lambda > 0$, i.e., $F(t)=1- e^{-\lambda t}$. Then, for all $p\in\cP_n$ , $q \in \cP_k$ and $x\in S$,
	\begin{align}\label{eq:poissonian_cross}
		\bE_x
[q(\widetilde X_t)p(\widetilde X_{t+h})]
		=H_{n+k}(x)^\top e^{tB_{n+k}}M_qe^{hB_n}\vec p,\quad t,h\geq 0,
	\end{align}
where $B_j\coloneqq A_j+\lambda(R_j-I_{D(j)})$ for $j\in\{n,n+k\}$.
%	\begin{align*}
%		B_k\coloneqq A_k+\lambda(R_k-I_{D(k)}),\quad k\in\{n,n+k\}.
%	\end{align*}
\end{cor}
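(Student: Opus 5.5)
The plan is to avoid any Markov argument at the deterministic time $t$ and work directly with the renewal equation \eqref{eq:correct-cross-moment-renewal} and its uniqueness statement from \Cref{cor:cross moment 2}. The idea is that, under Poissonian resetting, the source term $G_{q,h}$ factorises as a $t$-dependent part of the same shape as in \Cref{thm:moment formula 2}, multiplied on the right by a constant matrix. Then the known solution $K_{n+k}$, multiplied on the right by that constant, solves the cross-moment renewal equation.

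\emph{Step 1 (simplifying the source term).} Insert $F(du)=\lambda e^{-\lambda u}\,du$ and $\overline F(s)=e^{-\lambda s}$ into \eqref{eq:cross-source-fixed-lag}, and substitute $u=t+s$ with $s\in(0,h]$ in the integral. Pulling out $e^{-\lambda t}$ gives
\begin{align*}
G_{q,h}(t) = e^{-\lambda t}\, e^{tA_{n+k}} M_q \Big( e^{hA_n}e^{-\lambda h} + \int_0^h e^{sA_n} R_n K_n(h-s)\,\lambda e^{-\lambda s}\,ds \Big).
\end{align*}
The bracket is exactly the right-hand side of the renewal equation \eqref{eq:renewal equation} for $K_n$, evaluated at time $h$. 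Hence
\begin{align*}
G_{q,h}(t) = e^{tA_{n+k}}\overline F(t)\, M_q K_n(h).
\end{align*}

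\emph{Step 2 (identifying the solution).} Right-multiply the renewal equation \eqref{eq:renewal equation} for $K_{n+k}$, at time $t$, by the constant $D(n+k)\times D(n)$ matrix $M_qK_n(h)$. This shows that $t\mapsto K_{n+k}(t)M_qK_n(h)$ satisfies
\begin{align*}
C(t) = e^{tA_{n+k}}\overline F(t)M_qK_n(h) + \int_0^t e^{uA_{n+k}}R_{n+k}C(t-u)\,F(du).
\end{align*}
By Step 1 this is precisely \eqref{eq:correct-cross-moment-renewal}. The function is locally bounded in $t$, since $K_{n+k}$ is. Uniqueness in \Cref{cor:cross moment 2} therefore gives $C_{q,h}(t)=K_{n+k}(t)M_qK_n(h)$.

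\emph{Step 3 (exponential form).} Apply \Cref{cor:poisson_renewal_times} in degrees $n+k$ and $n$ to get $K_{n+k}(t)=e^{tB_{n+k}}$ and $K_n(h)=e^{hB_n}$; this yields \eqref{eq:poissonian_cross}.

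The only step that needs care is Step 1. There one must keep track of the integration domain $(t,t+h]$ and the factor $e^{-\lambda t}$, and check that the matrix dimensions match ($K_n(h)$ acts on $\cP_n$, while $M_q$ maps $\cP_n$ into $\cP_{n+k}$). I do not expect a real obstacle there. An alternative route would be to argue via memorylessness that $\bE_x[p(\widetilde X_{t+h})\mid\cG_t]=H_n(\widetilde X_t)^\top e^{hB_n}\vec p$. That, however, would require a regenerative property at deterministic times, which \Cref{lem:regenerative} does not provide. The renewal-equation route avoids this.
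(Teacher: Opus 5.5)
Your proof is correct and has the same structure as the paper's: under exponential $F$ you reduce $G_{q,h}(t)$ to $e^{tA_{n+k}}\overline F(t)M_q$ times a constant matrix, exhibit a solution of \eqref{eq:correct-cross-moment-renewal}, and conclude by uniqueness. The only difference is cosmetic: the paper first inserts $K_n(h)=e^{hB_n}$ and then applies the variation-of-constants formula \eqref{eq:nagel_variation_formula} twice, whereas you reuse the renewal equations for $K_n$ and $K_{n+k}$ directly (in the Poisson case these are the same identities); this gives the factorisation $C_{q,h}(t)=K_{n+k}(t)M_qK_n(h)$ and defers \Cref{cor:poisson_renewal_times} to the last step.
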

\begin{proof}
For square matrices $U$ and $V$, the variation-of-constants formula \cite[Corollary~III.1.7, p.~161]{EngelNagel00} gives
\begin{align}\label{eq:nagel_variation_formula}
	e^{t(U+V)}
	%=e^{tU}+\int_0^t e^{(t-s)U}Ve^{s(U+V)}\,ds
	=e^{tU}+\int_0^t e^{uU}Ve^{(t-u)(U+V)}\,du.
\end{align}
%Set
%\begin{align*}
%	U_k\coloneqq A_k-\lambda I_{D(k)},\qquad
%	B_k=U_k+\lambda R_k,\qquad k\in\{n,n+k\}.
%\end{align*}
Set $U_j=A_j-\lambda I_{D(j)}$ for $j\in\{n,n+k\}$. Applying \eqref{eq:nagel_variation_formula} with $U=U_n$ and $V=\lambda R_n$ yields
\begin{align}
	\label{eq:duhamel-Bn}
	e^{hB_n}
	=e^{hU_n}+\int_0^h\lambda e^{uU_n}R_ne^{(h-u)B_n}\,du,
\end{align}
where $B_n=A_n+\lambda(R_n-I_{D(n)})$. By \Cref{cor:poisson_renewal_times}, we obtain from \eqref{eq:cross-source-fixed-lag}
\begin{align*}
	G_{q,h}(t)
	&=e^{-\lambda(t+h)}e^{tA_{n+k}}M_qe^{hA_n}\\
	&\quad+\int_t^{t+h}\lambda e^{-\lambda u}
	e^{tA_{n+k}}M_qe^{(u-t)A_n}
	R_ne^{(t+h-u)B_n}\,du\\
	&=e^{t(A_{n+k}-\lambda I_{D(n+k)})}
	M_qe^{h(A_n-\lambda I_{D(n)})}\\
	&\quad+e^{t(A_{n+k}-\lambda I_{D(n+k)})}M_q
	\int_t^{t+h}\lambda e^{(u-t)(A_n-\lambda I_{D(n)})}
	R_ne^{(t+h-u)B_n}\,du\\
	&=e^{tU_{n+k}}M_qe^{hU_n}
	+e^{tU_{n+k}}M_q
	\int_0^h\lambda e^{vU_n}R_ne^{(h-v)B_n}\,dv\\
	&=e^{tU_{n+k}}M_q
	\left(e^{hU_n}
	+\int_0^h\lambda e^{vU_n}R_ne^{(h-v)B_n}\,dv\right)\\
	&=e^{tU_{n+k}}M_qe^{hB_n},
\end{align*}
where we used the substitution $v=u-t$ and \eqref{eq:duhamel-Bn}. Consequently, \eqref{eq:correct-cross-moment-renewal} becomes
\begin{align*}
	C_{q,h}(t)
	&=e^{tU_{n+k}}M_qe^{hB_n}
	+\int_0^t\lambda e^{uU_{n+k}}R_{n+k}C_{q,h}(t-u)\,du.
\end{align*}
Applying \eqref{eq:nagel_variation_formula} once more with $U=U_{n+k}$ and $V=\lambda R_{n+k}$ gives
\begin{align*}
	e^{tB_{n+k}}M_qe^{hB_n}
	&=e^{tU_{n+k}}M_qe^{hB_n}\\
	&\quad+\int_0^t\lambda e^{uU_{n+k}}R_{n+k}e^{(t-u)B_{n+k}}M_qe^{hB_n}\,du,
\end{align*}
Hence, uniqueness in \Cref{cor:cross moment 2} yields \eqref{eq:poissonian_cross}.

\end{proof}

\section{Stochastic representations of PUARs}\label{sec:stochastic_res}
We provide two stochastic representations of the PUAR $\widetilde X=(\widetilde X_t)_{t\geq0}$, extending the stochastic representations of polynomial processes without resetting from \Cref{prop:stochastic}. We define
\begin{equation}\label{eq:M}
\mathsf{M}(\omega; dt, dz) = \sum_{i=1}^\infty \delta_{(\tau_i(\omega), Z_i(\omega))}(dt,dz)
\end{equation}
which is a random counting measure on $\cB(\Rplus) \otimes \cB(\cZ)$, related to the marks $Z_i$ of $\widetilde X$. Furthermore, we define
\begin{equation}\label{eq:elapsed_time_V}
V_t = t - \tau_{N_t},
\end{equation}
which is the elapsed time since the last reset. The main results of this section are the following proposition and the theorem on stochastic representations, both of which will be shown after a series of auxilliary lemmas below:
\begin{prop}Let $\widetilde X$ be a PUAR with affine reset mechanism $(r,\cZ,\mu)$ and assume that the inter-arrival distribution $F$ has a density $f$, such that the hazard rate $h(t)=\frac{f(t)}{1 - F(t)}$ is well defined for all $t\geq0$. \label{lem:M_and_V}
Then, the elapsed time $V$ satisfies
\begin{equation}
	\label{eq:time since reset}
dV_t =dt-V_{t-}\int_{\cZ}\mathsf{M}(dt,dz),
\end{equation}
and the predictable compensator of $\mathsf{M}$ is
\begin{equation}% \label{eq:compensator_M}
	\nu^M(dt,dz) = h(V_{t-})\,\mu(dz)\,dt.
\end{equation}

\end{prop}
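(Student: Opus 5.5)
The proof splits into two parts: a pathwise identity for $V$, and the identification of the compensator of $\mathsf{M}$ via the classical hazard-rate description of a renewal point process.

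\emph{The dynamics of $V$.} By \eqref{eq:elapsed_time_V}, on each interval $[\tau_{n-1},\tau_n)$ we have $V_t = t-\tau_{n-1}$. So $V$ is càdlàg, increases with slope one between resets, and jumps at $\tau_n$ from $V_{\tau_n-}=W_n$ to $V_{\tau_n}=0$, i.e. $\Delta V_{\tau_n} = -V_{\tau_n-}$. Since $\int_{\cZ}\mathsf{M}(dt,dz) = dN_t$ and the $\tau_i$ are strictly increasing ($F$ has no atom at zero), the jump part of $V$ is $-\int_0^t V_{s-}\,dN_s$. Writing $V_t = t + \sum_{\tau_i\le t}\Delta V_{\tau_i}$ gives \eqref{eq:time since reset}. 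This is purely pathwise.

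\emph{The compensator.} It suffices to show that for every nonnegative $\bG$-predictable function $\Psi(\omega,t,z)$,
\[\bE\Big[\int\Psi\,d\mathsf{M}\Big] = \bE\Big[\int_0^\infty\int_\cZ \Psi(t,z)\,h(V_{t-})\,\mu(dz)\,dt\Big].\]
I would first separate the marks. Since $Z_i$ is independent of $\cG_{\tau_i-}$ with law $\mu$ (as in the proof of Lemma~\ref{lem:conditional_reset}), and $\Psi(\tau_i,\cdot)$ is $\cG_{\tau_i-}$-measurable, we get $\bE[\Psi(\tau_i,Z_i)\mid\cG_{\tau_i-}] = \int_\cZ \Psi(\tau_i,z)\,\mu(dz)$. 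This reduces the claim to showing that the compensator of $N$ is $h(V_{t-})\,dt$, with $\Psi$ replaced by a predictable process $\psi$.

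For the counting process I would work cycle by cycle on $(\tau_{n-1},\tau_n]$. On this interval $V_{t-}=t-\tau_{n-1}$, and a predictable $\psi$ agrees with $\psi_n(t-\tau_{n-1})$, where $\psi_n$ is a $\cG_{\tau_{n-1}}$-measurable deterministic function. This is the standard structure of predictable processes for marked point process filtrations (Jacod's theorem), applied to the filtration $\bG$ augmented by the independent segments $X^{(n)}$; using the cycle decomposition from the proof of Lemma~\ref{lem:regenerative}, $W_n$ is independent of $\cH_{n-1}\vee\sigma(X^{(n)})$. Conditionally on $\cG_{\tau_{n-1}}$ and on the segment $X^{(n)}$, the variable $W_n$ has law $F$, so
\[\bE\big[\psi_n(W_n)\big] = \int_0^\infty \psi_n(s) f(s)\,ds = \int_0^\infty \psi_n(s)h(s)\overline F(s)\,ds = \bE\Big[\int_0^{W_n}\psi_n(s)h(s)\,ds\Big].\]
Summing over $n$ and using monotone convergence yields the identity.

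The main obstacle is the reduction of predictable integrands to the cycle form $\psi_n(t-\tau_{n-1})$. The filtration $\bG$ contains the path of $\widetilde X$, which is not generated by the point process alone, so Jacod's characterization does not apply verbatim. My plan is to use a monotone class argument on the generators $\ind{(s,u]}(t)\ind{G}$, $G\in\cG_s$, together with the conditional independence of $W_n$ from the segment $X^{(n)}$. Concretely, I would verify the identity
\[\bP\big(W_n\in ds\mid \cH_{n-1}, X^{(n)}, W_n> s_0\big) = h(s)\,\frac{\overline F(s)}{\overline F(s_0)}\,ds,\]
which shows that observing $\widetilde X$ up to time $t<\tau_n$ gives no information about $W_n$ beyond the event $W_n>t-\tau_{n-1}$.
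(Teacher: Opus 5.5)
Your proposal is correct and matches the paper's proof: the pathwise derivation of $dV_t$ and the reduction from $\mathsf{M}$ to $N$ by conditioning on $\cG_{\tau_i-}$ are the same as in the paper. For $N$ you test directly against cycle-form predictable integrands $\psi_n(t-\tau_{n-1})$, whereas the paper verifies the martingale property for a single jump (Lemma~\ref{lem:single jump}) and then time-shifts and sums (Lemma~\ref{lem:unmarked jumps}). Your enlargement of $\bG$ by the segments $X^{(n)}$, together with the independence of $W_n$ from $\cH_{n-1}\vee\sigma(X^{(n)})$ and the $\bG$-predictability of $h(V_{t-})$, makes rigorous a step the paper uses without comment, namely $\bE[\ind{\{\tau_1>u\}}\mid\cG_s]=\ind{\{\tau_1>s\}}\bP(\tau_1>u\mid\tau_1>s)$ even though $\cG_s$ contains the path of $\widetilde X$; note that $\psi_n$ must then be a random function measurable with respect to $\cH_{n-1}\vee\sigma(X^{(n)})$, not merely $\cG_{\tau_{n-1}}$, as your later argument in fact uses.
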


% and that $\bG=(\cG_t)_{t\geq0}$ denotes the progressive enlargement of $\bF$.
\begin{thm}[Stochastic Representation of PUARs]\label{prop:repesent_reset}
Let $\widetilde X$ be a PUAR with affine reset mechanism $(r,\cZ,\mu)$, and assume that the inter-arrival distribution $F$ has a density $f$, such that the hazard rate $h(t)=\frac{f(t)}{1 - F(t)}$ is well defined for all $t\geq0$.
%Further, let $(Z_i)_{i\in\bN}$ be an i.i.d. randomisation sequence with $Z_1\sim\mu$ and let $\bG=(\cG_t)_{t\geq0}$ denote the progressive enlargement of $\bF$ with respect to $(\bT,(Z_i)_{i\in\bN})$.
%\begin{align*}
%		\gamma_t(z)\coloneqq r(\widetilde X_{t^-},z)-\widetilde X_{t^-}.
%	\end{align*}
	\begin{enumerate}[(i)]
	\item Let $X$ be a polynomial diffusion as in \Cref{prop:stochastic}(i). Then $\widetilde X$ satisfies
\begin{align}
	\label{eq:dyn reset X}
	d\widetilde X_t =\mathfrak{b}(\widetilde X_t)\,dt+\sigma(\widetilde X_t)\,dW_t+\int_{\cZ}\left(r(\widetilde X_{t^-},z)-\widetilde X_{t^-}\right)\,\mathsf{M}(dt,dz),
\end{align}
where $\mathsf{M}(dt,dz)$ is the random measure from \eqref{eq:M}.
\item Let $X$ be a polynomial process with semimartingale representation as in \Cref{prop:stochastic}(ii).
	%and let $\widetilde X$ denote the PPLSR. Assume that $\mu$ has finite absolute first and second moments.
%	in particular,\todo{remark that condition is satisfied by all in this paper considered resets, simplify assumption.}
%	\begin{align}\label{eq:nu moment}
%		\int_{\cZ}\|z-I\|\,\mu(dz)<\infty.
%	\end{align}
	Then, on any of the filtered probability spaces $(\Omega, \cF, \bG, \bP_x)$, for $x \in S$, the process $\widetilde X$ is a special semimartingale with characteristics $(\widetilde B, \widetilde C, \widetilde\nu)$ associated to the 'truncation function' $\chi(\xi) =\xi$ where $\widetilde C=C$ satisfies \eqref{eq:C_eta} and
\begin{align}
	\label{eq:B_reset}
	&\widetilde B_{t,i}=\int_0^{t} \left(\mathfrak{b}_i(\widetilde X_{s})+ h(V_{s^-})\int_{\cZ}\gamma_s^i(z)\,\mu(dz)\right)\,ds,\\
	%&\widetilde C_{t,ij}=C_{t,ij}
	%=\int_0^t c_{s,ij}\,ds,
	\label{eq:C_res}
	&\widetilde C_{t,ij}+\int_0^t\int_{\bR^d}\xi_i\xi_j \,\widetilde\nu(ds, d\xi)\\
	\nonumber &\hspace{10mm}= \int_0^{t} \left(\mathfrak{a}_{ij}(\widetilde X_{s}) +h(V_{s-})\int_{\cZ}\gamma^i_s(z)\gamma^j_s(z)\,\mu(dz)\right)\,ds,
\end{align}
where $\gamma_t(z)=r(\widetilde X_{t^-},z)-\widetilde X_{t^-}$, $\mathfrak{b}_i \in \cP_1$, and $\mathfrak{a}_{ij} \in \cP_2$
%\todo{add representation for $C$ (in remark)? finite second moments needed (in cuhiero we get that implicitly by ito's formula, change notation for indicator set.}
for all $i,j \in \set{1, \dotsc, d}$. Moreover, $\widetilde \nu$
%is the sum of $\nu$ given in \eqref{eq:C_eta} and the predictable compensator of the reset jumps and
can be written as
\begin{align}
	\label{eq:nu_reset}
	\widetilde\nu(dt,d\xi) =\widetilde K_t(d\xi)\,dt,\quad \widetilde K_t(d\xi) =K_{t}(d\xi) +h(V_{t^-})\int_{\cZ}
\mathds 1_{\{\gamma_t(z)\neq0\}}
\delta_{\gamma_t(z)}(d\xi)\,\mu(dz),
\end{align}
where $K_{t}(d\xi)$ denotes the kernel from \eqref{eq:K} and $V_t$ satisfies \eqref{eq:time since reset}.
\end{enumerate}
\end{thm}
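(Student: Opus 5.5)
The plan is to build the representation path by path from Definition~\ref{def:MPUR}, and then to compute the characteristics using the compensator of $\mathsf M$ from Proposition~\ref{lem:M_and_V}. I will assume that proposition, i.e.\ the dynamics \eqref{eq:time since reset} of $V$ and $\nu^M(dt,dz)=h(V_{t-})\mu(dz)dt$.

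For part (i), fix a cycle $[\tau_{n-1},\tau_n)$. On this interval $\widetilde X_t = X^{(n)}_{t-\tau_{n-1}}$, and $X^{(n)}$ solves the polynomial SDE of Proposition~\ref{prop:stochastic}(i) with its own Brownian motion $W^{(n)}$, started at $Y_{n-1}$. I then glue the drivers together into a single process
\[
W_t=\sum_n \bigl(W^{(n)}_{(t\wedge\tau_n-\tau_{n-1})^+}\bigr).
\]
I would check that $W$ is a $\bG$-Brownian motion via Lévy's characterization. Conditional on $\cG_{\tau_{n-1}}$ and on $W_n$, each increment is a fresh Brownian motion, because the $W_n$ are independent of $X^{(n)}$. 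Continuity of $W$ and $\langle W\rangle_t=tI$ are then immediate. Summing the SDE over cycles gives the continuous part $\mathfrak b\,dt+\sigma\,dW$. At $\tau_n$ the jump is $\widetilde X_{\tau_n}-\widetilde X_{\tau_n-}=r(\widetilde X_{\tau_n-},Z_n)-\widetilde X_{\tau_n-}$, which is exactly $\int_\cZ (r(\widetilde X_{t-},z)-\widetilde X_{t-})\,\mathsf M(\{\tau_n\},dz)$. Since there are only finitely many resets on compacts (because $F(0)=0$), this yields \eqref{eq:dyn reset X}.

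For part (ii), I decompose analogously:
\[
\widetilde X_t = x + \sum_n \bigl(X^{(n)}_{(t\wedge\tau_n-\tau_{n-1})^+}-X^{(n)}_0\bigr) + \int_0^t\int_\cZ \gamma_s(z)\,\mathsf M(ds,dz).
\]
The first sum is a concatenation of special semimartingales with characteristics $(B,C,\nu)$ from Proposition~\ref{prop:stochastic}(ii). Its local characteristics in $\bG$ are $\mathfrak b(\widetilde X_s)ds$, $c_s\,ds$ and $K(\widetilde X_{s-},d\xi)ds$. To justify this, I would apply the martingale problem from the extended generator on each cycle and use the regenerative property (Lemma~\ref{lem:regenerative}) to transfer the local martingale property to the stopped-and-shifted pieces in $\bG$.

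The reset term contributes drift and jumps through its compensator. Its compensated version
\[
\int_0^t\int_\cZ \gamma_s(z)\bigl(\mathsf M-\nu^M\bigr)(ds,dz)
\]
is a local martingale, since $\gamma$ is predictable and, by finite moments of $\mu$ and local boundedness of $\widetilde X_{s-}$, $\int|\gamma|\,d\nu^M$ is locally integrable. This adds $h(V_{s-})\int\gamma_s\,d\mu\,ds$ to $\widetilde B$, giving \eqref{eq:B_reset}.

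The jump measure of $\widetilde X$ is the sum of the diffusive-segment jump measure and $\sum_n\delta_{(\tau_n,\gamma_{\tau_n}(Z_n))}$, with the latter restricted to nonzero jumps. These two parts have a.s.\ disjoint jump times, because $\tau_n$ is independent of $X^{(n)}$ and $X^{(n)}$ has no fixed jump times. Compensating the second part via $\nu^M$ gives \eqref{eq:nu_reset}. Since the reset term has no continuous martingale part, $\widetilde C=C$.

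Formula \eqref{eq:C_res} then follows from adding \eqref{eq:C} along cycles to the second moment $\int\xi_i\xi_j$ of the reset part of $\widetilde K$, namely $h(V_{s-})\int\gamma^i\gamma^j\,d\mu$. Specialness follows from the finite first moments of the big jumps.

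I expect the main obstacle to be justifying that the segment-wise characteristics remain valid in the enlarged filtration $\bG$. Concretely, one must show that $u(X^{(n)}_{\cdot-\tau_{n-1}})$ minus its compensator is a $\bG$-local martingale on $[\tau_{n-1},\tau_n)$, even though $\bG$ reveals $\tau_n$ only at $\tau_n$. I would first try to prove this via the conditional independence \eqref{eq:cond_indep}. Given $\cG_{\tau_{n-1}}$, the segment $X^{(n)}$ is independent of $W_n$, so stopping it at the independent time $\tau_n$ and progressively enlarging its filtration by $\tau_n$ preserves martingales; this is an immersion property. Localization by $\tau_n$ then reduces everything to a single cycle.
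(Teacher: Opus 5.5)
Your proposal is correct and follows essentially the same route as the paper. You decompose $\widetilde X$ into the concatenated segment dynamics plus the reset integral $\int_\cZ \gamma_s(z)\,\mathsf M(ds,dz)$, and compensate the latter with $\nu^M(ds,dz)=h(V_{s-})\mu(dz)\,ds$ from Proposition~\ref{lem:M_and_V}. Local integrability comes from the moments of $\mu$ and the local boundedness of $\widetilde X_{s-}$. You then identify the jump measure via a.s.\ disjoint jump times and read off $\widetilde C=C$ and \eqref{eq:C_res}.

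Where the paper uses the Grigelionis representation and simply appeals to independence of the resets, you supply the justification the paper leaves implicit: the glued Brownian motion via L\'evy's characterization in (i), and immersion under progressive enlargement by the independent reset time in (ii). One small point: when $\sigma$ is degenerate, the glued $W$ need not be $\bG$-adapted. In (i) you should therefore work in $\bG$ enlarged by $W$ (or on an extension), where your immersion argument goes through unchanged.
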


We state two auxiliary lemmata in order to identify the compensator of the random measure appearing in \eqref{eq:dyn reset X}. Recall that $(\tau_i)_{i \in \NN}$ is an a.s. increasing sequence of random times and $\tau_{i+1} - \tau_i$, $i\in\bN$, are i.i.d. with hazard rate $h$.
\begin{lem}\label{lem:single jump}
%	\todo{encode in the filtration $\cG_t$.}Let $S$ be a non-negative random variable with hazard rate $h$.
The counting process $N_t^{(1)}=\mathds 1_{\{\tau_1\le t\}}$ has predictable compensator
\begin{align*}
	\Lambda_t^{(1)} = \int_0^t \mathds 1_{\{\tau_1>s\}}h(s)\,ds.
\end{align*}
\end{lem}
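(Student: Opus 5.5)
We need to show that $N^{(1)}_t-\Lambda^{(1)}_t$ is a $\bG$-martingale and that $\Lambda^{(1)}$ is predictable. Predictability is immediate. The integrand $s\mapsto \mathds 1_{\{\tau_1>s\}}$ is left-continuous at the jump $s=\tau_1$ as far as the integral is concerned, since modifying it at a single point does not change the Lebesgue integral. Hence $\Lambda^{(1)}_t=\int_0^{t\wedge\tau_1}h(s)\,ds=H(t\wedge\tau_1)$, where $H(t)=-\log\overline F(t)$ is the cumulative hazard. This is a continuous adapted process and therefore predictable. It is also increasing and integrable, because $\bE[H(\tau_1)]=\int_0^\infty H\,dF=1$; equivalently, $H(\tau_1)$ is standard exponential.

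For the martingale property we fix $s<t$ and show $\bE_x[N^{(1)}_t-N^{(1)}_s\mid\cG_s]=\bE_x[\Lambda^{(1)}_t-\Lambda^{(1)}_s\mid\cG_s]$.

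On $\{\tau_1\le s\}$ both increments vanish, so only $\{\tau_1>s\}$ matters. The key structural fact is that $\{\tau_1>s\}$ is an atom of $\cG_s$ in the following sense. On this event, $\cG_s$ restricted to it is generated, up to null sets, by $(X^{(1)}_u)_{u\le s}$, since $N$ and all marks are trivial before $\tau_1$. By Definition~\ref{def:MPUR}, $W_1=\tau_1$ is independent of $X^{(1)}$. Hence for any $\cG_s$-measurable $B$ we have $B\cap\{\tau_1>s\}=B'\cap\{\tau_1>s\}$ for some $B'\in\sigma(X^{(1)}_u,u\le s)\vee\cN$, and therefore
\[
\bE_x\big[\mathds 1_{\{\tau_1>s\}}\mathds 1_B\,\Psi(\tau_1)\big]=\bP_x(B')\,\bE\big[\mathds 1_{\{\tau_1>s\}}\Psi(\tau_1)\big]
\]
for any bounded measurable $\Psi$. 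This is the step I expect to need the most care, namely making rigorous that $\cG_s\cap\{\tau_1>s\}$ is generated by the first Markov segment. I would handle it with the standard description of the trace of a progressively enlarged filtration before the first random time, via a monotone class argument over generating sets of the form $\{\widetilde X_{u_1}\in A_1,\dots,N_{u_j}=0,\dots\}$.

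Given this factorisation, it suffices to compute unconditional quantities for $\tau_1$ alone:
\[
\bE\big[\mathds 1_{\{\tau_1>s\}}(\mathds 1_{\{\tau_1\le t\}})\big]=F(t)-F(s),
\]
and, by Fubini,
\[
\bE\Big[\mathds 1_{\{\tau_1>s\}}\int_s^t\mathds 1_{\{\tau_1>u\}}h(u)\,du\Big]=\int_s^t\overline F(u)h(u)\,du=\int_s^t f(u)\,du=F(t)-F(s).
\]
These agree, so $\bE_x[\mathds 1_B(M_t-M_s)]=0$ for all $B\in\cG_s$, where $M=N^{(1)}-\Lambda^{(1)}$. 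Thus $M$ is a martingale. Uniqueness of the compensator, as the predictable finite-variation process making $N^{(1)}-\Lambda^{(1)}$ a local martingale, identifies $\Lambda^{(1)}$ as the predictable compensator of $N^{(1)}$.
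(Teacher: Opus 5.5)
Your proof is correct and follows essentially the same route as the paper. Both arguments verify $\bE[N^{(1)}_t-N^{(1)}_s\mid\cG_s]=\bE[\Lambda^{(1)}_t-\Lambda^{(1)}_s\mid\cG_s]$ by reducing to the event $\{\tau_1>s\}$ and using $\overline F h=f$ (equivalently $\overline F=e^{-\int h}$), and both then note that $\Lambda^{(1)}$ is continuous (left-continuous suffices) and adapted, hence predictable. The one place you go further is in justifying $\bE[\mathds 1_{\{\tau_1>u\}}\mid\cG_s]=\mathds 1_{\{\tau_1>s\}}\bP(\tau_1>u\mid\tau_1>s)$, which the paper uses without comment; your trace-of-$\cG_s$ factorization on $\{\tau_1>s\}$ (a direct good-sets argument suffices), combined with the independence of $W_1$ and $X^{(1)}$, supplies exactly what is needed there.
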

\begin{proof}
	From $\overline F(t)=\exp\left(-\int_0^t h(s)\,ds\right)$ \cite[Section 1.1.2]{Aalen08} we have, for any $s<t$,
\begin{align*}
	\mathbb{P}(\tau_1>t \mid \tau_1>s) = \frac{\overline F(t)}{\overline F(s)} = \exp\left(-\int_s^t h(u)\,du\right).
\end{align*}
The conditional expectation of the jump indicator over $(s,t]$ is therefore
\begin{align*}
	%&=
	%\mathbb{E}\!\left[
	%\int_s^t \mathds{1}_{\{S>u\}}h(u)\,du
	%\,\middle|\, \cG_s
	%],
	\mathbb{E}[ \mathds{1}_{\{\tau_1\le t\}}-\mathds{1}_{\{\tau_1\le s\}}\mid \cG_s ] = \mathds{1}_{\{\tau_1>s\}}\left(1-\exp\left(-\int_s^t h(u)\,du\right)\right),
\end{align*}
%where the last equality uses Fubini and the explicit conditional survival function.
By Fubini and since $\bE[\mathds{1}_{\{\tau_1>u\}}\mid\cG_s]=\mathds{1}_{\{\tau_1>s\}}\bP(\tau_1>u\mid \tau_1>s)$ for $u>s$ we get
\begin{align*}
	%\bE\left[\Lambda^S_t-\Lambda^S_s]
	\mathbb{E}\left[\int_s^t \mathds{1}_{\{\tau_1>u\}}h(u)\,du\,\middle|\, \cG_s\right] &=\int_s^t\bE[\mathds{1}_{\{\tau_1>u\}}\mid\cG_s]h(u)\,du\\
	%&=\int_s^t\bP(S>u\mid S>s)h(u)\,du\\
	&=\mathds{1}_{\{\tau_1>s\}}\int_s^t\exp\left(-\int_s^u h(v)\,dv\right)h(u)\,du,
\end{align*}
Then the substitution $H(u)=\int_s^uh(v)\,dv$ yields
\begin{align*}
	\mathbb{E}\left[\int_s^t \mathds{1}_{\{\tau_1>u\}}h(u)\,du\,\middle|\, \cG_s\right] &=\mathds{1}_{\{\tau_1>s\}}\int_s^t\exp\left(-H(u)\right)H'(u)\,du \\
	&=\mathds{1}_{\{\tau_1>s\}}\left(1-\exp\left(-\int_s^t h(u)\,du\right)\right),
\end{align*}
and thus we have shown that $\mathbb{E}[N_t^{(1)}-N_s^{(1)}\mid\cG_s]=\mathbb{E}[\Lambda_t^{(1)}-\Lambda_s^{(1)}\mid\cG_s]$ for {$s<t$}. Since $N_s^{(1)}-\Lambda_s^{(1)}$ is $\cG_s$-measurable, adding this term to both sides yields the desired martingale property of $N^{(1)}-\Lambda^{(1)}$ with respect to $\bG$. Moreover, $\Lambda^{(1)}$ is left-continuous and therefore predictable, which concludes the proof.
\end{proof}

Similar results can be also found in Theorem 1.3.1 of \cite{fleming1991counting}.

\begin{lem}\label{lem:unmarked jumps}
%	Let $(\tau_n)_{n\in\bN}$ be an a.s. increasing sequence of random times and let $(S_n)_{n\in\bN}$ be i.i.d. positive random variables with hazard rate $h$ such that
%\begin{align}\label{eq:renewal times}
%\tau_0=0,
%\quad
%\tau_n=\sum_{k=1}^n S_k, \quad n\in\bN.
%%\quad
%%N_t=\sum_{n\ge 1}\mathds 1_{\{T_n\le t\}}.
%\end{align}
The process $N_t=\sum_{n =1}^\infty \mathds 1_{\{\tau_n\le t\}}$ has predictable compensator
\begin{align*}
	\Lambda_t = \int_0^t h(V_{s-})\,ds,
\end{align*}
where $V_t=t-\tau_{N_t}$ is the elapsed time since the last jump of $N_t$, and $V_{t-}=t-\tau_{N_{t-}}$.
\end{lem}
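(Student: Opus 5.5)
We decompose $N$ into the single-jump counting processes of the individual renewal cycles and apply Lemma~\ref{lem:single jump} to each of them. Write
\[N_t=\sum_{n=1}^\infty N^{(n)}_t,\qquad N^{(n)}_t=\mathds 1_{\{\tau_n\le t\}},\]
and aim to show that $N^{(n)}$ has $\bG$-compensator
\[\Lambda^{(n)}_t=\int_0^t \mathds 1_{\{\tau_{n-1}<s\le\tau_n\}}\,h(s-\tau_{n-1})\,ds.\]
On $\{\tau_{n-1}<s\le \tau_n\}$ we have $N_{s-}=n-1$, hence $V_{s-}=s-\tau_{n-1}$. The intervals $(\tau_{n-1},\tau_n]$ partition $(0,\infty)$ (non-explosion holds since $\tau_n\to\infty$ a.s.), so summing over $n$ gives exactly $\Lambda_t=\int_0^t h(V_{s-})\,ds$. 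Each integrand is left-continuous and adapted, so every $\Lambda^{(n)}$, and hence $\Lambda$, is predictable.

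For the $n$-th cycle we use the regenerative property, Lemma~\ref{lem:regenerative}. Conditionally on $\cG_{\tau_{n-1}}$, the shifted process $W_n=\tau_n-\tau_{n-1}$ has law $F$ and is independent of $\cG_{\tau_{n-1}}$. The key point is that, after $\tau_{n-1}$ and before $\tau_n$, the filtration $\bG$ carries no extra information about $W_n$ beyond $\{W_n>s-\tau_{n-1}\}$. The reason is that the $\widetilde X$-path on that stretch is the independent segment $X^{(n)}$, and marks are only revealed at reset times. Hence the computation of Lemma~\ref{lem:single jump} goes through verbatim. For $s<t$, on $\{\tau_{n-1}\le s\}$,
\[\bE[N^{(n)}_t-N^{(n)}_s\mid\cG_s]=\mathds 1_{\{\tau_n>s\}}\Bigl(1-\exp\Bigl(-\int_{s-\tau_{n-1}}^{t-\tau_{n-1}}h(u)\,du\Bigr)\Bigr),\]
and the same expression is obtained for $\bE[\Lambda^{(n)}_t-\Lambda^{(n)}_s\mid\cG_s]$ by Fubini and the substitution used in Lemma~\ref{lem:single jump}. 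On $\{\tau_{n-1}>s\}$, we further condition on $\cG_{\tau_{n-1}\wedge t}$ and reduce to the previous case via optional stopping. Alternatively, one applies the one-jump lemma to the time-shifted process and transports the martingale property through the stopping time $\tau_{n-1}$. Either way, $N^{(n)}-\Lambda^{(n)}$ is a $\bG$-martingale.

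Finally we pass to the sum. The partial sums $\sum_{n\le m}(N^{(n)}-\Lambda^{(n)})$ are martingales, and they coincide with $N-\Lambda$ on $[0,\tau_m]$. Therefore $N-\Lambda$ is a local martingale with localizing sequence $\tau_m\uparrow\infty$. Alternatively, monotone convergence yields $\bE[N_t]=\bE[\Lambda_t]$, and the true martingale property follows when the renewal function is finite. Since $N$ is a counting process, its compensator is identified as $\Lambda$.

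The main obstacle is justifying rigorously that $\cG_s$ on $\{\tau_{n-1}\le s<\tau_n\}$ reduces, for the purpose of predicting $W_n$, to $\cG_{\tau_{n-1}}\vee\sigma(\mathds 1_{\{W_n>s-\tau_{n-1}\}})$. This requires that $X^{(n)}$ is independent of $W_n$ given $Y_{n-1}$, which is built into Definition~\ref{def:MPUR}, together with the standard description of $\cG_s$ on such stochastic intervals in terms of the pre-$\tau_{n-1}$ $\sigma$-field. The first thing we would try is to prove this via a monotone-class argument on generating events of $\cG_s$.
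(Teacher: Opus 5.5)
Your proposal is correct and follows essentially the same route as the paper. You decompose $N=\sum_n N^{(n)}$, obtain the compensator $\Lambda^{(n)}_t=\int_0^t \mathds 1_{\{\tau_{n-1}<s\le\tau_n\}}h(s-\tau_{n-1})\,ds$ of each cycle from Lemma~\ref{lem:single jump} via the shift at $\tau_{n-1}$, and sum using $V_{s-}=s-\tau_{n-1}$ on $(\tau_{n-1},\tau_n]$. You are more careful than the paper on two points it glosses over: why $\cG_s$ carries no extra information about $W_n$ on $\{\tau_{n-1}\le s<\tau_n\}$, and the localization by $\tau_m$. On the latter, the renewal function $\sum_m F^{*m}(t)$ is always finite here (it is bounded geometrically as in the proof of Theorem~\ref{thm:moment formula 2}), so $N-\Lambda$ is a true martingale.
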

\begin{proof}
	For each $n\in\bN$ write $N_t^{(n)}=\mathds{1}_{\{\tau_n\le t\}}$. Then
\begin{align*}
	N_t=\sum_{n\ge1}N_t^{(n)}.
\end{align*}
Since the inter-arrival times are strictly positive a.s., the processes $N^{(n)}$ have disjoint jump times. Hence the compensator of $N$ is the sum of the compensators of the $N^{(n)}$. Fix $n\ge1$. Since $\tau_n-\tau_{n-1}$ is independent of $\cG_{\tau_{n-1}}$ and has hazard rate $h$, the process $N^{(n)}$ is, after the time shift $\tau_{n-1}$, equivalent in law to the single-jump process treated in \Cref{lem:single jump}. Therefore its predictable compensator is
\begin{align*}
	\Lambda_t^{(n)} = \int_0^t \mathds{1}_{\{\tau_{n-1}<s\le \tau_n\}} h(s-\tau_{n-1})\,ds .
\end{align*}
Summing over $n$ gives
\begin{align*}
	\Lambda_t = \sum_{n\ge1}\Lambda_t^{(n)} = \int_0^t \sum_{n\ge1} \mathds{1}_{\{\tau_{n-1}<s\le \tau_n\}} h(s-\tau_{n-1})\,ds .
\end{align*}
For each $s$, we find exactly one $n$ satisfying $\tau_{n-1}<s\le \tau_n$, namely $n=N_{s-}+1$. On this set, we have $s-\tau_{n-1}=s-\tau_{N_{s-}}=V_{s-}$. Thus
\begin{align*}
	\Lambda_t=\int_0^t h(V_{s-})\,ds,
\end{align*}
and hence $N-\Lambda$ is a $\mathbb G$-martingale.
%\begin{align*}
%	\bE[N_t^S-\Lambda^S_t\mid\cG_s]
%	=\bE[N_t^S-N^S_s+N^S_s-\Lambda^S_t\mid\cG_s]
%	=N^S_s-\Lambda^S_s
%\end{align*}
\end{proof}

\begin{proof}[Proof of \Cref{lem:M_and_V}]In order to show that the integer-valued random measure $\mathsf{M}(dt,dz)$
%\begin{align*}
%\mathsf{M}(dt,dz)
%=
%\sum_{n\ge 1}\delta_{Z_n}(dz)\delta_{T_n}(dt)
%\end{align*}
has predictable compensator $\nu^M(dt,dz)=h(V_{t-})\,\mu(dz)\,dt$ we must show by \cite[Theorem II.1.8]{Jacod03} that for every non-negative $\mathbb G$-predictable process $\varphi(t,z,\omega)$
\begin{align}
	%\mathbb E\left[\int_0^\infty \int_{\cZ}\varphi(s,z)\,h(V_{s-})\,\mu(dz)\,ds\right]
	\label{eq:compensator_cond}
	\mathbb E\left[\int_0^\infty \int_{\cZ}\varphi(s,z)\,\mathsf{M}(ds,dz)\right] =\mathbb E\left[\int_0^\infty \int_{\cZ}\varphi(s,z)\,h(V_{s-})\,\mu(dz)\,ds\right]
\end{align}
holds true. First, assume $\varphi$ is bounded. Then, by Fubini and \eqref{eq:M}, the left-hand side equals
\begin{align*}
	\mathbb E\left[\sum_{n\ge 1}\varphi(\tau_n,Z_n)\right] &=\mathbb E\left[\sum_{n\ge 1}\mathbb E[\varphi(\tau_n,Z_n)\mid \cG_{\tau_n-}]\right],
\end{align*}
and the fact that $\varphi$ is predictable and $Z_n$ is independent from $\cG_{\tau_n-}$, yields
\begin{align*}
	\mathbb E[ \varphi(\tau_n,Z_n)\mid \cG_{\tau_n-} ] = \int_{\cZ}\varphi(\tau_n,z)\,\mu(dz) \eqqcolon  \psi(\tau_n),
\end{align*}
where $\psi(t)$ defines a $\mathbb G$-predictable process. Therefore
\begin{align*}
	\mathbb E\left[\sum_{n\ge 1}\varphi(\tau_n,Z_n)\right] =\mathbb E\left[\sum_{n\ge 1}\psi(\tau_n)\right] =\mathbb E\left[\int_0^\infty \psi(s)\,dN_s\right],
\end{align*}
where $N=(N_t)_{t\geq0}$ is a counting process defined as in \Cref{lem:unmarked jumps}. Then, by \Cref{lem:unmarked jumps}, the compensator of $N$ is $h(V_{t-})\,dt$. Thus, by \cite[Theorem II.1.8]{Jacod03}, the last expectation equals
\begin{align*}
	\mathbb E\left[ \int_0^\infty \psi(s)\,h(V_{s-})\,ds \right] = \mathbb E\left[ \int_0^\infty \int_{\cZ} \varphi(s,z)\,h(V_{s-})\,\mu(dz)\,ds \right],
\end{align*}
which shows \eqref{eq:compensator_cond} for bounded $\varphi$. For general non-negative $\mathbb G$-predictable $\varphi$ the assertion follows by monotone convergence. \par
It remains to show that $V$ is governed by \eqref{eq:elapsed_time_V}. For $t\in[\tau_n,\tau_{n+1})$, one has $V_t=t-\tau_n$. At $\tau_n$,
\[
    V_{\tau_n}=0,
    \qquad
    V_{\tau_n-}=\tau_n-\tau_{n-1},
    \qquad
    \Delta V_{\tau_n}=-V_{\tau_n-}.
\]
Since $V_0=0$, for $t\geq0$ it follows that
\[
\begin{aligned}
    V_t
    &=t+\sum_{0<s\le t}\Delta V_s\\
    &=t-\sum_{n\ge1:\,\tau_n\le t}V_{\tau_n-}\\
    &=t-\int_{(0,t]}\int_{\cZ}
          V_{s-}\,\mathsf M(ds,dz).\qedhere
\end{aligned}
\]
%which equivalently gives \eqref{eq:elapsed_time_V}.
\end{proof}

\begin{proof}[Proof of \Cref{prop:repesent_reset}]
For (i) we note that on each interval $[\tau_n,\tau_{n+1})$, the process $\widetilde X$ evolves according to the dynamics
\begin{align*}
	d\widetilde X_t=b(\widetilde X_t)\,dt+\sigma(\widetilde X_t)\,dW_t.
\end{align*}
At the reset time $\tau_n$ it jumps according to $\Delta \widetilde X_{\tau_n}=r(\widetilde X_{\tau_n-},Z_n)-\widetilde X_{\tau_n-}$. The jump contribution up to time $t$ is then
\begin{align*}
	\sum_{n:\tau_n\le t}\left(r(\widetilde X_{\tau_n-},Z_n)-\widetilde X_{\tau_n-}\right) =\int_0^t \int_{\cZ}\left(r(\widetilde X_{s-},z)-\widetilde X_{s-}\right)\mathsf{M}(ds,dz).
\end{align*}
By the independence of $\mathsf{M}$ from $X$, we get
\begin{align*}
	d\widetilde X_t = b(\widetilde X_t)\,dt + \sigma(\widetilde X_t)\,dW_t + \int_{\cZ} \left( r(\widetilde X_{t-},z) - \widetilde X_{t-} \right) \mathsf{M}(dt,dz).
\end{align*}
\par

For (ii), by the Grigelionis representation theorem \cite[Theorem 2.1.2]{Jacod12}, we may, after passing to a 'very good' filtered extension \cite[(2.1.19)]{Jacod12} if necessary, represent $X$ with respect to the truncation function $\chi(\xi)=\xi$ by
\begin{align*}
	X_t =X_0+\int_0^t\mathfrak b(X_s)\,ds+\int_0^t\sigma_s\,dW_s+\int_0^t\int_{\bR^d}\xi\,\bigl(\cJ(ds,d\xi)-K_s(d\xi)\,ds\bigr),
\end{align*}
where $\sigma_t\sigma_t^\top=c_t$ satisfies \eqref{eq:C}, $\mathfrak b$ is given in \eqref{eq:B}, and $\cJ(dt,d\xi)$ is the jump measure of $X$ with predictable compensator $K_t(d\xi)$ satisfying \eqref{eq:K}. Since the reset is independent of $X$ and jump times and reset times a.s. do not coincide, we get
\begin{align}
	\label{eq:SDE reset}
	\widetilde X_t &=\widetilde X_0+\int_0^t\mathfrak b(\widetilde X_s)\,ds+\int_0^t\sigma_s\,dW_s +\int_0^t\int_{\bR^d}\xi\,\bigl(\cJ(ds,d\xi)-K_s(d\xi)\,ds\bigr)\\
	\nonumber &\hspace{5mm}+\int_0^t\int_{\cZ}\gamma_s(z)\,\mathsf{M}(ds,dz),
\end{align}
where the predictable compensator of $\mathsf{M}$ is $\nu^\mathsf{M}(dt,dz)=h(V_{t^-})\,\mu(dz)\,dt$ by \Cref{lem:M_and_V}.
%Since $\widetilde X_t=\widetilde X_{t-}$ for Lebesgue-a.e. $t$, we may use $\widetilde X_t$ in the $dt$-integrals and $\widetilde X_{t-}$ in the predictable jump integrands.
Note that $K_t(d\xi)$ now denotes the intrinsic jump kernel evaluated at the reset path of $\widetilde X$. Next, we show that
\begin{align*}
	B_t^R\coloneqq\int_0^t\int_{\cZ}\gamma_s(z)\,\nu^\mathsf{M}(ds,dz) =\int_0^t\int_{\cZ}\gamma_s(z)\,h(V_{s-})\mu(dz)\,ds
\end{align*}
is locally integrable. Recall that $r(x,z)=z_0 + \bar z \cdot x$ is affine and thus,
\begin{align*}
	\gamma_s(z)=z_0 + (\bar z-I)\cdot X_{s-}.
\end{align*}
Let $\rho_n\coloneqq n\wedge\inf\{t\ge 0:\|\widetilde X_t\|>n\}$ for $n\in\bN$. Then on $[0,\rho_n]$ we get
\begin{align*}
	&\|\gamma_s(z)\| \le\|\widetilde X_{s-}\|\,\|\bar z-I\|+\|z_0\|\le n\|z-I\|+\|x_0\|,\\
	&\|\gamma_s(z)\|^2\le 2n^2\|\bar z-I\|^2+2\|z_0\|^2.
\end{align*}
Hence, for every $T>0$, setting $k_l = \int_{\cZ} \norm{z_0}^l \mu(dz)$ and $m_l=\int_{\cZ}\|\bar z-I\|^l\,\mu(dz)$ for $l\in\{1,2\}$, we get
\begin{align}
	\label{eq:first_moment_res}
	&\int_0^{T\wedge\rho_n} h(V_{s-})\int_{\cZ}\|\gamma_s(z)\|\,\mu(dz)\,ds \le(k_1 + nm_1)\int_0^{T\wedge\rho_n}h(V_{s-})\,ds,\\
	\label{eq:second_mom_res}
	&\int_0^{T\wedge\rho_n}h(V_{s-})\int_{\cZ}\|\gamma_s(z)\|^2\,\mu(dz)\,ds \le(2k_2 + 2n^2m_2)\int_0^{T\wedge\rho_n}h(V_{s-})\,ds.
\end{align}
Now $k_1, k_2, m_1$ and $m_2$ are finite by assumption and after localizing with respect to $\int_0^t h(V_{s-})\,ds$, the right-hand sides are finite. Consequently, by \eqref{eq:first_moment_res}, $B_t^R$ is locally integrable and by \cite[Proposition I.1.28]{Jacod03} we can conclude that
\begin{align*}
	%\int_0^t\int_{\cZ}\gamma(\widetilde X_{s^-},z)\,\mathsf{M}(ds,dz)
	%&\hspace{2mm}+\int_0^th(V_{s-})\int_{\cZ}\gamma(\widetilde X_{s^-},z)\,\mu(dz)\,ds,
	L_t^R\coloneqq\int_0^t\int_{\cZ}\gamma_s(z)\mathsf{M}(ds,dz)-B^R_t
\end{align*}
is a (purely discontinuous) local martingale. Substituting this into \eqref{eq:SDE reset} yields
\begin{align*}
	\widetilde X_t = X_0+\widetilde B_t+\widetilde L_t,
\end{align*}
where $\widetilde B_t=B_t+B_t^R$ is predictable and of locally finite variation, and
\begin{align*}
	\begin{aligned} \widetilde L_t =L_t^R+\int_0^t\sigma_s\,dW_s+\int_0^t\int_{\mathbb R^d}\xi\bigl(\cJ(ds,d\xi)-K_s(d\xi)\,ds\bigr) \end{aligned}
\end{align*}
is a local martingale. Therefore $\widetilde X$ is a special semimartingale. It remains to identify $\widetilde\nu$ and $\widetilde C$. The jump measure $\cJ^R$ of $\widetilde X$ at reset times is given by
\begin{align*}
		\cJ^R(dt,d\xi)
		=\int_{\cZ}\mathds 1_{\{\gamma_t(z)\neq0\}}
		\delta_{\gamma_t(z)}(d\xi)\,\mathsf{M}(dt,dz),
	\end{align*}
	Since $\gamma_t(z)$ is predictable, its predictable compensator $\nu^R$ is the image of $\nu^M$ under $\gamma$ restricted to non-zero jump sizes, namely $\nu^R(dt,d\xi)=K^R_t(d\xi)\,dt$ where
	\begin{align*}
		%\label{eq:reset compensator}
		K^R_t(d\xi)
		=h(V_{t-})\int_{\cZ}\mathds 1_{\{\gamma_t(z)\neq0\}}
		\delta_{\gamma_t(z)}(d\xi)\,\mu(dz)
\end{align*}
is a predictable random measure.
Consequently, by the same arguments leading to \eqref{eq:SDE reset}, the jump measure of $\widetilde X$ is $\widetilde\cJ=\cJ+\cJ^R$, and its predictable compensator is
\begin{align*}
	\widetilde\nu(dt,d\xi) =\bigl(K_t(d\xi)+K^R_t(d\xi)\bigr)\,dt.
\end{align*}
Moreover, since the reset contribution is purely discontinuous, we get
\begin{align*}
	\widetilde C_{t}=C_{t} =\int_0^t c_{s}\,ds,
\end{align*}
and \eqref{eq:second_mom_res} ensures that the reset contribution in \eqref{eq:C_res} is well defined. Indeed, since
\begin{align*}
	|\gamma^i_s(z)\gamma^j_s(z)|\le\|\gamma_s(z)\|^2,
\end{align*}
for all $i,j\in\{1,\dots,d\}$, we get, analogously to \eqref{eq:second_mom_res},
\begin{align*}
	\int_0^T h(V_{s-}) \int_{\cZ} |\gamma^i_s(z) \gamma^j_s(z)| \,\mu(dz)\,ds <\infty
\end{align*}
locally. Therefore \eqref{eq:C_res} follows from \eqref{eq:C_eta} by adding the reset term on both sides.
\end{proof}

\begin{cor}[Poissonian resetting]
Let $\widetilde X$ be as in \Cref{prop:repesent_reset}(ii) with $f(t)=\lambda e^{-\lambda t}\mathds{1}_{\{t\geq0\}}$ for $\lambda>0$ and write $r(x,z)=z_0+z\cdot x$. Set
\begin{align*}
	\gamma(x,z)\coloneqq r(x,z)-x=z_0+(z-I)x.
\end{align*}
Then the characteristics from \Cref{prop:repesent_reset}(ii) can be written as
\begin{align}
	\label{eq:B_poisson_reset}
	&\widetilde B_{t,i} =\int_0^t \widetilde{\mathfrak b}_i(\widetilde X_s)\,ds,\\
	\label{eq:C_poisson_reset}
	&\widetilde C_{t,ij} +\int_0^t\int_{\bR^d}\xi_i\xi_j\,\widetilde\nu(ds,d\xi) =\int_0^t \widetilde{\mathfrak a}_{ij}(\widetilde X_s)\,ds,
\end{align}
where $\widetilde C=C$, with $C$ as in \eqref{eq:C_eta} and
\begin{align*}
	\widetilde{\mathfrak b}_i(x) &\coloneqq \mathfrak b_i(x)+\lambda\int_{\cZ}\gamma_i(x,z)\,\mu(dz),\\
	\widetilde{\mathfrak a}_{ij}(x) &\coloneqq \mathfrak a_{ij}(x) +\lambda\int_{\cZ}\gamma_i(x,z)\gamma_j(x,z)\,\mu(dz),
\end{align*}
where $\mathfrak a$ and $\mathfrak b$ are as given in \Cref{prop:repesent_reset}(ii). Moreover,
\begin{align}
	\label{eq:nu_poisson_reset}
		\widetilde\nu(dt,d\xi)=\widetilde K_t(d\xi)\,dt,\quad
		\widetilde K_t(d\xi)=K_t(d\xi)
		+\lambda\int_{\cZ}\mathds 1_{\{\gamma(\widetilde X_{t^-},z)\neq0\}}
		\delta_{\gamma(\widetilde X_{t^-},z)}(d\xi)\,\mu(dz).
\end{align}
In particular, $\widetilde{\mathfrak b}_i\in\cP_1$ and $\widetilde{\mathfrak a}_{ij}\in\cP_2$ for all $i,j\in\{1,\dots,d\}$ and
%If $\mu$ has finite absolute moments up to order $m\ge3$, then
for every multi-index $\mathbf k\in\bN_0^d$ with $3\le|\mathbf k| $,
\begin{align}
		% \label{eq:K_poisson_reset_moments}
	\int_{\bR^d}\xi^{\mathbf k}\,\widetilde K_t(d\xi) =\sum_{|\mathbf l|=0}^{|\mathbf k|} \widetilde\alpha_{\mathbf l}\,\widetilde X_{t-}^{\mathbf l},
\end{align}
for almost every $t\ge0$, with some coefficients $\widetilde\alpha_{\mathbf l}$. Thus the Poissonian affine resetting preserves the polynomial form of the semimartingale characteristics.
\end{cor}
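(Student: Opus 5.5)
The plan is to specialise \Cref{prop:repesent_reset}(ii) to the exponential inter-arrival law and then verify the polynomial structure of each characteristic directly. For $f(t)=\lambda e^{-\lambda t}$ we have $\overline F(t)=e^{-\lambda t}$, so the hazard rate is $h\equiv\lambda$. This constant is independent of the elapsed time $V$. Substituting $h(V_{s-})=\lambda$ into \eqref{eq:B_reset}, \eqref{eq:C_res} and \eqref{eq:nu_reset} immediately gives the displayed forms \eqref{eq:B_poisson_reset}, \eqref{eq:C_poisson_reset} and \eqref{eq:nu_poisson_reset}, with $\widetilde C=C$ unchanged. This uses $\gamma_s(z)=\gamma(\widetilde X_{s-},z)$. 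Inside the $ds$-integrals we may replace $\widetilde X_{s-}$ by $\widetilde X_s$, since $\widetilde X$ has at most countably many jumps and the two differ only on a Lebesgue-null set.

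Next I check the degrees. Since $\gamma(x,z)=z_0+(\bar z-I)x$ is affine in $x$, we get
\[\int_\cZ\gamma_i(x,z)\,\mu(dz)=\int z_{0,i}\,\mu(dz)+\sum_j\Big(\int(\bar z-I)_{ij}\,\mu(dz)\Big)x_j\in\cP_1.\]
Similarly $\int\gamma_i\gamma_j\,d\mu$ is a quadratic polynomial in $x$, with coefficients given by mixed moments of $z$ of order at most two. These are finite by Definition~\ref{def:affine_reset}(iii). Adding $\mathfrak b_i\in\cP_1$ and $\mathfrak a_{ij}\in\cP_2$ from \Cref{prop:stochastic}(ii) yields $\widetilde{\mathfrak b}_i\in\cP_1$ and $\widetilde{\mathfrak a}_{ij}\in\cP_2$.

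For the higher moments of the jump kernel with $|\mathbf k|\ge3$, linearity of the integral splits the moment into two parts:
\[\int\xi^{\mathbf k}\widetilde K_t(d\xi)=\int\xi^{\mathbf k}K_t(d\xi)+\lambda\int_\cZ \gamma(\widetilde X_{t-},z)^{\mathbf k}\,\mu(dz).\]
The indicator $\mathds 1_{\{\gamma\neq0\}}$ can be dropped because $\xi^{\mathbf k}$ vanishes at $\xi=0$ when $|\mathbf k|\ge1$. The first term has the required form by \eqref{eq:K}. That identity is stated with $X_t$, but for almost every $t$ this equals $X_{t-}$, and along the reset path it is evaluated at $\widetilde X_{t-}$, as noted in the proof of \Cref{prop:repesent_reset}. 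For the second term, $\gamma(x,z)^{\mathbf k}=\prod_i\big(z_{0,i}+\sum_j(\bar z-I)_{ij}x_j\big)^{k_i}$ is a polynomial in $x$ of degree at most $|\mathbf k|$. Its coefficients are polynomials in the entries of $z$ of degree at most $|\mathbf k|$, and these integrate finitely against $\mu$ by the all-order moment assumption. Integrating therefore gives a polynomial in $\widetilde X_{t-}$ of degree at most $|\mathbf k|$. Combining the two coefficient families defines $\widetilde\alpha_{\mathbf l}$.

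I expect the main point of care to be the bookkeeping between $\widetilde X_{t}$ and $\widetilde X_{t-}$ and the transfer of \eqref{eq:K} to the reset path. This is handled by the fact that it holds for almost every $t$ and that the pre-reset dynamics coincide with those of $X$ between resets. Everything else is a direct substitution $h\equiv\lambda$.
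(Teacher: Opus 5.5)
Your proposal is correct and follows essentially the same route as the paper. You substitute $h\equiv\lambda$ into \eqref{eq:B_reset}, \eqref{eq:C_res} and \eqref{eq:nu_reset}, then use that $\gamma(\cdot,z)$ is affine and that $\mu$ has moments of all orders to get the degree bounds for $\widetilde{\mathfrak b}_i$, $\widetilde{\mathfrak a}_{ij}$ and the moments of $\widetilde K_t$; your extra remarks (dropping the indicator because $\xi^{\mathbf k}$ vanishes at $0$, and replacing $\widetilde X_{s-}$ by $\widetilde X_s$ for Lebesgue-a.e.\ $s$) are correct details the paper leaves implicit.
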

\begin{proof}
For Poissonian resetting we have $h\equiv\lambda$. Hence \eqref{eq:B_reset}, \eqref{eq:C_res}, and \eqref{eq:nu_reset} yield \eqref{eq:B_poisson_reset}, \eqref{eq:C_poisson_reset}, and \eqref{eq:nu_poisson_reset}. Since $\gamma_i(x,z)=z_{0,i}+\sum_{\ell=1}^d(z_{i\ell}-\delta_{i\ell})x_\ell$,
%\begin{align*}
%\gamma_i(x,z)=x_{0,i}+\sum_{\ell=1}^d(z_{i\ell}-\delta_{i\ell})x_\ell,
%\end{align*}
the map
\begin{align*}
	x\mapsto\int_{\cZ}\gamma_i(x,z)\,\mu(dz)
\end{align*}
is affine. Thus $\widetilde{\mathfrak b}_i\in\cP_1$ for $i\in\{1,\dots,d\}$. Similarly,
\begin{align*}
	x\mapsto\int_{\cZ}\gamma_i(x,z)\gamma_j(x,z)\,\mu(dz)
\end{align*}
is at most a quadratic polynomial, so $\widetilde{\mathfrak a}_{ij}\in\cP_2$. Finally, let $3\le|\mathbf k|\le m$. By \eqref{eq:nu_poisson_reset},
\begin{align*}
	\int_{\bR^d}\xi^{\mathbf k}\,\widetilde K_t(d\xi) =\int_{\bR^d}\xi^{\mathbf k}\,K_t(d\xi) +\lambda\int_{\cZ}\gamma(\widetilde X_{t^-},z)^{\mathbf k}\,\mu(dz).
\end{align*}
The first term is a polynomial in $\widetilde X_t$ of degree at most $|\mathbf k|$ by \eqref{eq:K}. Since $\gamma(\cdot,z)$ is affine, the second term is a polynomial in $\widetilde X_{t^-}$ of degree at most $|\mathbf k|$, with finite coefficients since $\mu$ has finite moments. %As $\widetilde X_t=\widetilde X_{t^-}$ for Lebesgue-a.e.\ $t$, this proves \eqref{eq:K_poisson_reset_moments}.
\end{proof}
\begin{rem}[Generators]% \label{rem:reset_generators}
Let $X$ be a polynomial diffusion process as in \Cref{prop:stochastic}(i) with extended generator $\cA$.
%and suppose that a reset mechanism is given as in \Cref{prop:repesent_reset}.
	\begin{enumerate}[(i)]
		\item For Poissonian resetting, i.e. if $f(t)=\lambda e^{-\lambda t}\mathds 1_{\{t\ge 0\}}$ for some $\lambda>0$, we have $h\equiv\lambda$. Hence the predictable compensator of $\mathsf{M}(dt,dz)$ in \eqref{eq:dyn reset X} is
\begin{align*}
	\nu^M(dt,dz)=\lambda\,\mu(dz)\,dt .
\end{align*}
In particular, along the lines of \cite{Avrachenkov12} is can be shown that $\widetilde X$ as defined in \eqref{eq:dyn reset X} is Markovian and applying It\^o's formula to $g(\widetilde X_t)$ verifies that its extended generator $\widetilde\cA$ for $g\in C^2(S)$ is
\begin{align}
	\label{eq:pois_generator}
	\widetilde{\cA} g(x)
	%=\mathfrak b(x)^\top \nabla g(x) +\frac12 \operatorname{Tr}\!\left(\mathfrak a(x)\nabla^2 g(x)\right)
	&=\cA g(x)+\lambda\int_{\cZ} \bigl(g(r(x,u))-g(x)\bigr)\,\mu(du)\\\nonumber
	&=\cA g(x)+\lambda(\cR g(x)-g(x)).
	%\nonumber &\hspace{5mm}+\lambda\int_{\cZ} \bigl(g(r(x,u))-g(x)\bigr)\,\mu(du).
\end{align}

\item The Markovianity for a general renewal reset process $\widetilde X$ can be recovered by considering the coupled process $(\widetilde X_t,V_t)_{t\geq0}$, where $V_t$ is the elapsed time since the last reset defined by \eqref{eq:time since reset}. This is a standard semi-Markov construction; see \cite{CocozzaThivent21,Cox55,Straka14}.
Indeed, let $R_t\coloneqq \tau_{N_t+1}-t$ denote the residual waiting time until the next reset.
%Conditionally on $\cG_t$, the current inter-arrival time is known
%to have survived up to age $V_t$.
Then, by the renewal property,
\begin{align*}
	\bP(R_t>s\mid\cG_t) &=\bP(\tau_{N_t+1}-\tau_{N_t}>V_t+s\mid\tau_{N_t+1}-\tau_{N_t}>V_t)\\
	&=\frac{\overline F(V_t+s)}{\overline F(V_t)} = \exp\left(-\int_{V_t}^{V_t+s}h(u)\,du\right), \quad s\ge0.
\end{align*}
Thus the conditional law of the next reset time depends on the past only through $V_t$, and between reset times, $\widetilde X$ evolves according to the Markov dynamics of $X$.
%, and at a reset time an independent mark with law $\mu$ is
%drawn and $(x,a)$ is mapped to $(r(x,z),0)$.
Consequently, the conditional future evolution given $\cG_t$ depends on the past only through $(\widetilde X_t,V_t)$, and hence $(\widetilde X_t,V_t)_{t\ge0}$ is Markovian. Applying It\^o's formula to $g(\widetilde X_t,V_t)$, using \eqref{eq:time since reset}
%$dV_t=dt-V_{t-}\int_{\cZ}\mathsf{M}(dt,du)$
and the compensator $\nu^M(dt,du)=h(V_{t-})\mu(du)dt$, gives, for $g\in C^{2,1}(S\times\mathbb R_+)$,
\begin{align*}
	\widetilde\cA g(x,a) &= \mathfrak b(x)^\top \nabla_x g(x,a) +\frac12 \operatorname{Tr}\!\left(\mathfrak a(x)\nabla_x^2 g(x,a)\right) +\partial_a g(x,a)\\
	&\hspace{5mm} +h(a)\int_{\cZ} \left( g(r(x,u),0)-g(x,a) \right)\,\mu(du).
\end{align*}
	%\item \todo{characteristics in \Cref{prop:repesent_reset} (ii) are non-polynomial}
	\end{enumerate}
\end{rem}

\section*{Declaration of generative AI and AI-assisted technologies in the manuscript preparation process}
During the preparation of this work, the authors used ChatGPT-5.6 in order to proofread the manuscript and to suggest improvements to the authors' own proofs of Lemma~2.4, Theorem 3.11, and Proposition 4.1. After using this tool, the authors reviewed and edited the content as needed and take full responsibility for the content of the published article.

\printbibliography

\end{document}